\begin{document}
\numberwithin{equation}{section}

\def\1#1{\overline{#1}}
\def\2#1{\widetilde{#1}}
\def\3#1{\widehat{#1}}
\def\4#1{\mathbb{#1}}
\def\5#1{\frak{#1}}
\def\6#1{{\mathcal{#1}}}

\def\C{{\4C}}
\def\R{{\4R}}
\def\N{{\4N}}
\def\Z{{\4Z}}

\title[Triangular resolutions and effectiveness]
{Triangular resolutions and effectiveness for holomorphic subelliptic multipliers}
\author[S.-Y. Kim \& D. Zaitsev]{Sung-Yeon Kim and Dmitri Zaitsev}
\address{S.-Y. Kim: Center for Mathematical Challenges, Korea Institute for Advanced Study, 85 Hoegiro, Dongdaemun-gu
Seoul, Korea }
\email{sykim8787@kias.re.kr}
\address{D. Zaitsev: School of Mathematics, Trinity College Dublin, Dublin 2, Ireland}
\email{zaitsev@maths.tcd.ie}
\subjclass[2010]{32T25, 32T27, 32W05, 32S05, 32S10, 32S45, 32B10, 32V15, 32V35, 32V40}
\maketitle

\def\Label#1{\label{#1}}


\def\cn{{\C^n}}
\def\cnn{{\C^{n'}}}
\def\ocn{\2{\C^n}}
\def\ocnn{\2{\C^{n'}}}


\def\dist{{\rm dist}}
\def\const{{\rm const}}
\def\rk{{\rm rank\,}}
\def\id{{\sf id}}
\def\aut{{\sf aut}}
\def\Aut{{\sf Aut}}
\def\CR{{\rm CR}}
\def\GL{{\sf GL}}
\def\Re{{\sf Re}\,}
\def\Im{{\sf Im}\,}
\def\span{\text{\rm span}}
\def\mult{\text{\rm mult\,}}
\def\reg{\text{\rm reg\,}}
\def\ord{\text{\rm ord\,}}
\def\hot{\text{\rm HOT\,}}

\def\codim{{\rm codim}}
\def\crd{\dim_{{\rm CR}}}
\def\crc{{\rm codim_{CR}}}

\def\eps{\varepsilon}
\def\d{\partial}
\def\a{\alpha}
\def\b{\beta}
\def\g{\gamma}
\def\G{\Gamma}
\def\D{\Delta}
\def\Om{\Omega}
\def\k{\kappa}
\def\l{\lambda}
\def\L{\Lambda}
\def\z{{\bar z}}
\def\w{{\bar w}}
\def\Z{{\1Z}}
\def\t{\tau}
\def\th{\theta}

\emergencystretch15pt
\frenchspacing

\newtheorem{Thm}{Theorem}[section]
\newtheorem{Cor}[Thm]{Corollary}
\newtheorem{Pro}[Thm]{Proposition}
\newtheorem{Lem}[Thm]{Lemma}
\newtheorem{Prob}[Thm]{Problem}

\theoremstyle{definition}\newtheorem{Def}[Thm]{Definition}

\theoremstyle{remark}
\newtheorem{Rem}[Thm]{Remark}
\newtheorem{Exa}[Thm]{Example}
\newtheorem{Exs}[Thm]{Examples}

\def\bl{\begin{Lem}}
\def\el{\end{Lem}}
\def\bp{\begin{Pro}}
\def\ep{\end{Pro}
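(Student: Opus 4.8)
The plan is to establish the asserted effective bound by induction on the length of the triangular resolution, reducing the general statement to the two elementary operations that generate Kohn's procedure: forming Jacobian determinants of collections of already-produced multipliers together with the defining function, and passing to radicals with a controlled exponent. Throughout I would argue locally near a point $p$ of finite type, in coordinates $(z_1,\dots,z_n)$ adapted to the triangular structure, so that the $j$th level of the system involves only $z_1,\dots,z_j$; attached to each level is a finite invariant (a multiplicity, or Newton-polytope data of the generators), and the key point is that each step of Kohn's procedure strictly improves this invariant, so the induction is well founded with an explicitly bounded number of stages.

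For the base case I would check by a direct computation in the triangular normal form that the first Jacobian determinant built from the defining function $r$ and the coordinate pre-multipliers already lies at level one, with an exponent that is linear in the initial data. For the inductive step, assuming that after $k$ stages we have a multiplier $m_k$ controlling level $k$ with exponent $N_k$, I would form the Jacobian determinant $J_k$ of $(r,m_k,z_{i_1},\dots,z_{i_{n-2}})$ for a suitably chosen index set, verify that $J_k$ is not identically zero modulo the ideal attached to level $k+1$, and then take its radical. The effectiveness then amounts to showing that $N_{k+1}$ is at most polynomial in $N_k$, in $n$, and in the pointwise degrees occurring in the triangular data, so that the product over the boundedly many stages yields the final closed-form constant.

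The step I expect to be the main obstacle is the passage to radicals: Kohn's theory only guarantees $g\in I$ whenever some power $g^{\ell}$ lies in $I$, with no a priori bound on $\ell$, and an unbounded $\ell$ would destroy effectiveness. To control it I would exploit the explicit structure of a triangular system — after the adapted coordinate change each level ideal is monomial, or differs from a monomial ideal only by terms of strictly higher weight, so its radical exponent is governed by the combinatorics of the associated Newton polytope rather than by an abstract Noetherianity argument. Proving this uniform radical bound, and checking that it interacts correctly with the Jacobian step (that forming $J_k$ does not reintroduce uncontrolled high-order terms), is the crux of the argument. Once it is in place, the remainder is bookkeeping: combining the linear base estimate, the polynomial recursion for the $N_k$, and the bound on the number of levels into a single effective constant depending only on $n$ and the discrete data of the triangular resolution.
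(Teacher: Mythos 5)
There is a genuine gap, and it sits exactly where you yourself flag the crux. The proposition concerns an arbitrary filtration of multiplier ideals $I_1\subset\cdots\subset I_{k+1}$ and a triangular resolution $Q_j\circ\Gamma$ for a general holomorphic map germ $\Gamma$; nothing in this setting is monomial, nor "monomial up to higher weight," so your proposed control of the radical exponent by Newton-polytope combinatorics of the level ideals has no foothold. Indeed, the examples of Heier and of Catlin--D'Angelo (Proposition 4.4 of their paper) show that without exploiting additional structure the root order in procedure (P2) genuinely cannot be bounded, so the assertion that "each step of Kohn's procedure strictly improves the invariant" is not something one can posit; it is the thing to be proved, and your sketch gives no mechanism for it. The effective Nullstellensatz does enter the paper, but only in the construction of the resolution itself (bounding the power $\lambda_j$ so that $Q_j^{\lambda_j}\circ\Gamma\in I_j$, via the multiplicity bound $n\mu$), not as a substitute for controlling the roots taken inside the Kohn iteration.

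The missing idea is how to pass from what the Jacobian step actually produces to the desired multiplier. The Jacobian of $(f_1,\ldots,f_k,\psi_{k+1},\ldots,\psi_n)$ factors as $(\partial_{w_1}Q_1\cdots\partial_{w_k}Q_k)\circ\Gamma$ times the Jacobian $J$ of $\Gamma$, so (P1) only yields multiples of the form $(A_L\circ\Gamma)\,f_{k+1}$ with $A_L=\partial_{w_1}^{\ell_1}Q_1\cdots\partial_{w_k}^{\ell_k}Q_k$, and the whole difficulty is to strip off these derivative factors effectively. The paper does this by a finite induction over the lattice of multi-indices $1\le\ell_j\le\mu_j$ in reverse lexicographic order: at each step one inserts the previously obtained multipliers $(A_{L_j}\circ\Gamma)f_{k+1}$ (or $f_j$ when $\ell_j=1$) into a new Jacobian, uses the triangular structure to factor it, and extracts $(A_LQ_{k+1})\circ\Gamma$ by (P2) with root order at most $k+1$; since each $Q_j$ is a Weierstrass polynomial of degree $\mu_j$ in $w_j$, the top derivatives are constants and the final multi-index gives $Q_{k+1}\circ\Gamma$ itself, after $\mu_1\cdots\mu_k$ applications of each procedure. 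This Catlin--D'Angelo-style repeated-differentiation scheme is precisely what makes the root orders dimensional ($\le k+1$) rather than dependent on any radical exponent, and it is absent from your argument; without it, your claimed polynomial recursion for the exponents $N_k$ is unsupported, and your base/inductive Jacobians of $(r,m_k,z_{i_1},\ldots,z_{i_{n-2}})$ do not address the factor one actually needs to remove.
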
}
\def\bt{\begin{Thm}}
\def\et{\end{Thm}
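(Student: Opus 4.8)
The plan is to reduce the statement to an effective termination result for Kohn's algorithm of holomorphic subelliptic multipliers, and then to prove such termination by an inductive ``triangular'' construction on the number of variables.

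For the reduction I would invoke the now-standard implication, due to Kohn and Catlin, that a subelliptic estimate at $0$ holds once the multiplier ideal built by iterating the two allowable operations --- (i) passing to the radical of an ideal of already obtained multipliers, and (ii) adjoining determinants of Jacobian-type minors formed from previously obtained multipliers together with the defining function --- eventually contains a unit, the gain $\eps$ being bounded below by an explicit function of the number of iterations and of the vanishing orders of the functions involved. For a special domain $\Re w+|f(z)|^2<0$ the starting multipliers are, up to allowable operations, the components of $f=(f_1,\dots,f_N)\colon(\cn,0)\to(\C^N,0)$, and the finite-type hypothesis is exactly $f^{-1}(0)=\{0\}$; so everything comes down to bounding, effectively in $n$ and in the multiplicities $\ord_0 f_j$, the number of allowable steps needed to reach the unit ideal.

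The heart of the matter is the construction of a \emph{triangular resolution}. After a generic linear change of coordinates I would, using one round of allowable operations (an elimination/resultant computation realized through Jacobian minors together with a radical), extract from $f_1,\dots,f_N$ a germ $g_1$ that is a Weierstrass polynomial in $z_1$ of degree controlled by $\ord_0 f$; on the branched cover $X_1=\{g_1=0\}\to\C^{n-1}_{z_2,\dots,z_n}$ the restricted data still has an isolated zero at $0$, of controlled multiplicity, so the construction recurses in one fewer variable. After $n$ rounds this yields germs $g_1(z_1,\dots,z_n),\dots,g_n(z_n)$ lying in the multiplier ideal with $V(g_1,\dots,g_n)=\{0\}$, arranged so that $\det(\d g_i/\d z_j)$ is triangular with diagonal entries $\d g_i/\d z_i$ of controlled order; feeding this determinant back into operation (ii) and then taking finitely many radicals collapses the multiplier ideal to a unit.

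Finally I would assemble the effective bound: each of the $n$ rounds costs a number of allowable steps bounded by an explicit function of the current multiplicity, the multiplicity grows in a controlled way under restriction to $X_1,X_2,\dots$, and the successive bounds compose; together with the first paragraph this produces the asserted effective $\eps$. I expect the main obstacle to be precisely this recursion across the singular, possibly reducible covers $X_k=\{g_1=\dots=g_k=0\}$: one must guarantee that the Jacobian--radical operations genuinely \emph{produce} a distinguished polynomial of the claimed degree (a general-position argument plus a degree bound on the relevant resultants), and one must interpret operations performed ``on $X_k$'' as honest allowable operations in the ambient $\cn$, so that the per-round step counts really add up to a bound for Kohn's algorithm itself. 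Keeping the multiplicity of the restricted system on these singular covers under explicit control --- so that the induction stays effective rather than merely finite --- is where the real work lies.
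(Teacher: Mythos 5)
Your skeleton (reduce to effective termination of Kohn's algorithm, then build a triangular system $g_1(z_1,\dots,z_n),\dots,g_n(z_n)$ whose Jacobian is triangular) superficially resembles the paper's triangular resolutions, but the two steps you compress into ``one round of allowable operations'' are exactly where the theorem lives, and your sketch does not supply them. First, extracting a Weierstrass polynomial $g_1$ that is a \emph{multiplier of controlled order} is not a single elimination-plus-radical: membership in the radical only feeds procedure (P2), and (P2) is precisely the non-effective step, since the root order is a priori unbounded. The paper's resolution of this is structural: the Weierstrass polynomials $h_j(w_j,\dots,w_n)$ are taken in the \emph{target} of a finite map $\Gamma=(z_1,\dots,z_k,\psi_{k+1},\dots,\psi_n)$ built from generic linear combinations of pre-multipliers (Siu's lemma), they are placed in the ideal $(f_1,\dots,f_j)$ of already-constructed multipliers by an effective Nullstellensatz with root order $\le n\cdot\mathrm{mult}$ (Lemma~\ref{radical}, following Heier), and the \emph{new} multiplier $h_{k+1}\circ\Gamma$ is then generated by an induction over the lattice of derivative multi-indices $\partial_{w_1}^{\ell_1}Q_1\cdots\partial_{w_k}^{\ell_k}Q_k$ (Proposition~\ref{claim}), in which every application of (P2) has root order $\le k+1$ and the number of steps is $\mu_1\cdots\mu_k$. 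Nothing in your proposal produces root orders bounded independently of the data, so the ``finitely many radicals'' at the end of your second paragraph reintroduce the very ineffectiveness you set out to remove.

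Second, your recursion ``on the branched cover $X_k=\{g_1=\dots=g_k=0\}$'' --- which you yourself flag as the main obstacle --- is not something Kohn's procedures can see: (P1) and (P2) operate in the ambient ring $\mathcal{O}_{n,0}$, and there is no allowable operation that restricts to a singular, possibly non-reduced subvariety and comes back. The paper avoids this entirely: the descending structure is carried by the maps $\Gamma_k$ and by the filtration $(f_1)\subset\cdots\subset(f_1,\dots,f_k)$ of ideals of multipliers, all computations staying in $\mathcal{O}_{n,0}$. Relatedly, you have no analogue of the step that makes the induction effective rather than merely finite: controlling the multiplicity of the (partial) Jacobian jointly with the current multipliers after a generic linear change of coordinates. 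In the paper this is Proposition~\ref{v} (meta-procedure (MP1)), proved via embedded desingularization and a volume/sector estimate, combined with Corollary~\ref{multiplicity-J} and Siu's lemma; your ``general-position argument plus a degree bound on the relevant resultants'' is an assertion standing in for that argument. As written, the proposal identifies the right difficulties but does not overcome them.
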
}
\def\bc{\begin{Cor}}
\def\ec{\end{Cor}}
\def\bd{\begin{Def}}
\def\ed{\end{Def}}
\def\br{\begin{Rem}}
\def\er{\end{Rem}}
\def\be{\begin{Exa}}
\def\ee{\end{Exa}}
\def\bpf{\begin{proof}}
\def\epf{\end{proof}}
\def\ben{\begin{enumerate}}
\def\een{\end{enumerate}}
\def\beq{\begin{equation}}
\def\eeq{\end{equation}}

\begin{abstract}
A solution to the effectiveness problem
in Kohn's algorithm for generating subelliptic multipliers
is provided for domains that include
those given by sums of squares of holomorphic functions
(also including infinite sums).
These domains 
are of particular interest
due to their relation with 
complex and algebraic geometry
and in particular 
seem to include all previously known cases.
Furthermore, combined with a recent result of M.~Fassina~\cite{Fa20},
our effectiveness method allows
to establish effective subelliptic estimates
for more general classes of domains.

Our main new tool,
 {\em a triangular resolution},
is the construction
of subelliptic
 multipliers decomposable as $Q\circ\G$,
where $\G$ is constructed from pre-multipliers
and $Q$ is part of a triangular system.
The effectiveness is proved
via a sequence of newly proposed procedures,
called here {\em meta-procedures},
built on top of the Kohn's procedures,
where the order of subellipticity
can be effectively tracked.
Important sources of inspiration
are algebraic geometric techniques by Y.-T. Siu \cite{S10, S17}
and procedures for triangular systems 
by D.W. Catlin and J.P. D'Angelo 
\cite{D95, CD10}.

The proposed procedures are purely algebraic
and as such
can also be of interest
for geometric and computational problems involving 
Jacobian determinants,
such as resolving singularities of holomorphic maps.


\end{abstract}

\section{Introduction}
The Kohn's technique of subelliptic multipliers \cite{K79}
is one of the few known general techniques
connecting a priori estimates 
for systems of partial differential equations
(subelliptic estimates for the $\bar\d$-Neumann problem \cite[Definition~1.11]{K79})
with tools from commutative algebra and algebraic geometry.
Kohn's key innovations include
refining subelliptic estimates 
by introducing multiplier functions \cite[\S2.8]{S17}, 
proposing purely algebraic procedures
to generate new multipliers,
and developing an algorithm to apply these procedures
to obtain subelliptic estimates
for real-analytic pseudoconvex domains of finite D'Angelo type.
The reader is referred to
\cite{D95, DK99, S01, S02, K04, S05, S07, Ch06, S09, CD10, S10, S17}
for more extensive details on subelliptic multipliers
and Siu's accounts \cite{S07, S09, S17}
on their broad role and relation with 
other multipliers arising in complex and algebraic geometry.

On the other hand, the {\em question of effectiveness,
i.e. control of the Sobolev exponent} of multipliers in Kohn's algorithm
has remained an open problem since Kohn's work of 1979
(apart from the complex dimension $2$, see \cite[\S8]{K79},
where it is based on fundamental results
by H\"ormander \cite{H65} and Rothschild-Stein \cite{RS76}).
In higher dimension, the situation is much less understood,
in fact, examples of \cite{Heier} (\S1.1 in the preprint version)
and \cite[Proposition 4.4]{CD10} in dimension $3$
illustrate a lack of such control,
see also \cite[\S4.1]{S17} for
a detailed explanation of this important phenomenon.

To tackle the effectiveness,
Siu \cite{S10, S17} 
 introduced algebraic geometric techniques
to obtain the effectiveness 
 in the important case of 
{\em special domains} of finite type
in dimension $3$,
with further indications how to proceed in
the more general cases of
special domains in higher dimension,
and outlining a program to treat 
the more general real-analytic and smooth cases.
A different effective procedure
in Kohn's algorithm
was given by D'Angelo~\cite{D95}
and Catlin-D'Angelo \cite[Section~5]{CD10}
for special domains given by 
so-called {\em triangular systems}
of holomorphic functions.
In \cite{N14}  A.C.~Nicoara
proposed a construction for the termination of the Kohn algorithm in the real-analytic case with an indication of the ingredients needed for the effectivity.
More recently, the authors of this article 
established another effective procedure in dimension $3$
by means of a new tool of jet-vanishing orders \cite{KZ18},
where the reader is referred for further discussion and references.

After receiving a preprint of our paper, Y.-T. Siu told us about his unpublished proof of the effective termination of KohnÕs algorithm for special domains of arbitrary dimension by using descending induction on the dimension of the subvariety defined by effectively constructed multipliers and the techniques of multiplicities of Jacobian determinants and fiberwise differentiation without the complication of considering generalizations of the result of 
Skoda-Brian\c con.

\subsection{Main results}
In this paper we establish, in particular,
the effectiveness for special domain of arbitrary dimension
by means of the proposed new geometric tools
of {\em triangular resolutions} and
{\em effective meta-procedures}
that are
largely inspired by the
algebraic geometric techniques by
\hbox{Y.-T.~Siu}~\cite{S10, S17}
and procedures for triangular systems by
D.W.~Catlin and J.P.~D'Angelo \cite{D95, CD10}.

Recall first the definition of special domains
 \cite[\S7]{K79}, \cite[\S2.8]{S17}:

\bd
A {\em special domain} in $\C^{n+1}$
is one defined locally near each boundary 
point by
\beq\Label{om}
	\Re(z_{n+1})+\sum_{j=1}^N |F_j(z_1,\ldots, z_n)|^2<0
\eeq
where $F_1,\ldots,F_N$ are holomorphic functions.
\ed
 
Next recall \cite[\S7]{K79} that for special domains,
Kohn's multiplier generation procedures 
can be formulated purely in terms of holomorphic functions,
starting from the given set 
$S=\{F_1, \ldots, F_N\}$ of the functions in \eqref{om}.
We also adopt Siu's terminology \cite{S10,S17},
calling functions in $S$ {\em pre-multipliers} 
and the Sobolev exponent 
used in the
modified subelliptic estimate
with factor $f$,
{\em the order of subellipticity} 
or just {\em the order} of the multiplier $f$.


\bd\Label{k-hol}
For an initial set $S$ 
of {\em pre-multipliers},
the {\em Kohn's procedures} consist of:
\begin{enumerate}
\item[(P1)]
	for $0<\eps\le 1/2$ and $f_1, \ldots, f_n$ either in $S$
	or multipliers of order $\ge\eps$,
	it follows that
	the Jacobian determinant
	$$
		g
		:=
		\det\left(\frac{\d f_i}{\d z_j}\right)
		= 
		\frac{\d(f_1,\ldots,f_n)}{\d(z_1,\ldots,z_n)}
	$$
	is a multiplier of order $\ge \eps/2$;
	
\item[(P2)] 
	for $0<\eps<1$, $k,r\in\N_{\ge 1}$,  
	$f_1, \ldots, f_k$ multipliers or order $\ge\eps$,
	and
	$g$ a holomorphic function (germ) with $g^r\in (f_1, \ldots, f_k)$,
	it follows that $g$ is a multiplier of order $\ge \eps/r$.
\end{enumerate}
\ed

Note that it is (P2) that is responsible for the lack of effectiveness,
as it is a priori not clear what root order $r$ needs to be used.
To formulate our first result recall 
from D'Angelo
\cite{D79, D82, D93}
and Siu \cite{S10, S17}
that the order of finite type
can be estimated on both sides
in terms of other quantities
(see also 
Boas-Straube \cite{BS92},
Baouendi-Huang-Rothschild \cite{BHR96}, 
Fu-Isaev-Krantz \cite{FIK96},
Fornaess-Lee-Zhang \cite{FLZ14},
Brinzanescu-Nicoara \cite{BN15, BN19},
McNeal-Mernik  \cite{MM17},
 D'Angelo \cite{D17}, 
Fassina \cite{Fa19, Fa20}, 
Huang-Yin \cite{HY19},
and the second author \cite{Z19}
for relations with other invariants).
In this paper we use the {\em multiplicity}:

\bd
The {\em multiplicity} of 
an ideal $I\subset\6O_{n,p}$ 
in the ring of germs at $p$ 
of holomorphic functions in $\cn$
is the dimension of the quotient
$\dim(\6O_{n,p}/I)\le \infty$.
The {\em multiplicity} of
a subset $S\subset \6O_{n,p}$
is the multiplicity of its generated ideal.
The {\em multiplicity} of
a domain \eqref{om}
at a boundary point $p$ is
the multiplicity of 
the set of all germs at $p$ of
the functions $F_j - F_j(p)$,
 $1\le j\le N$.
\ed

To emphasize its algebraic nature,
we formulate our first result 
purely in terms of the
Kohn's procedures (P1) and (P2):

\bt\Label{p12}
For every initial subset $S\subset \6O_{n,p}$ of finite multiplicity $\le \nu$,
there exists an effectively computable sequence 
of germs at $p$ of holomorphic functions
$f_1,\ldots, f_m$, where $f_m=1$ and each 
$f_j$ is either in $S$ or is obtained by applying 
to $(f_1,\ldots,f_{j-1})$
one of the Kohn's procedures (P1) or (P2).
Furthermore, the number of steps $m$ and the root orders in (P2)
are effectively bounded by functions depending only on $(n,\nu)$.
\et

In the next result we apply
Theorem~\ref{p12}
to obtain effective subelliptic estimates,
with additional explicit bound
for the order of subellipticity:

\bt
There exists a positive function 
$\eps\colon \N_{>0} \times \N_{>0} \to \R_{>0}$
such that for any domain 
\eqref{om}
of finite multiplicity 
$\le \nu$ at a boundary point $p$,
a subelliptic estimate holds at $p$
with effectively bounded
order of subellipticity $\ge \eps(n,\nu)$.
In fact, one can take
\beq\Label{e-est}
	\eps(n,\nu)
	=
	\left(
			4
		(2n+2)^{2(n\nu)^{ (3n)^{n+1} }}
	\right)^{-1}.
\eeq
\et

\br
Since the multiplicity $\nu$ of $I$ satisfies
$\nu \le t^n$
where $t$ is the type
by a result of D'Angelo 
\cite[Theorem~2.7]{D82},
an effective bound in terms of the type
can be obtained by
substituting $t^n$ for $\nu$
in \eqref{e-est}.
\er

\subsection{More general domains and applications}
Due to the algebraic nature of 
Theorem~\ref{p12} referring only to Kohn's procedures
rather than to domains,
the conclusion can be applied for more general domains,
in conjunction with other effective procedures
 to obtain the initial set $S$ in Definition~\ref{k-hol}. 
Consider general domains given locally 
near a boundary point $p$ by
\beq\Label{psh-domain}
	\rho(z,\bar z) < 0,
\eeq
where $d\rho\ne 0$.
Let $S \subset \6O_{n,p}$
be the set of
holomorphic function germs $F$ such that
\beq\Label{S}
	\Big|
		\sum_j 
		\frac{\d F}{\d z_j}
		v_j
	\Big|^2
	\le
	c
	\left(
	\sum_{j,k}
	\frac{
		\d^2 \rho
	}{
		\d z_j \d \bar z_k
	} v_j \bar v_k
	+ 
	\Big|
		\sum_j 
		\frac{\d \rho}{\d z_j}
		v_j
	\Big|^2
	\right)
\eeq
for some
$ c>0$ 
 and all 
 $v_1, \ldots, v_n$.
It follows from 
Fassina \cite[Remark~4.5]{Fa20}
 that $S$
consists of {\em pre-multipliers
in the sense of Siu} \cite[\S2.9]{S17}
that can be used in Kohn's procedures.
(In particular, if 
$\rho=\Re z_{n+1} + \phi(z_1,\ldots,z_n,\bar z_1,\ldots,\bar z_n)$
where $\phi$ is an infinite sum
of squares of holomorphic functions $F_j$, 
each of $F_j$ is contained in $S$.)
Hence we obtain the following application of Theorem~\ref{p12}:

\bc\Label{non-special}
With $\eps(n,\nu)$ given by \eqref{e-est},
for a smooth pseudoconvex domain
\eqref{psh-domain},
assume that
the multiplicity of
$S$
is $\le \nu$.
Then 
a subelliptic estimate holds at $p$
with order $\ge \eps(n,\nu)$.
In particular, an effective subelliptic estimate
holds for domains \eqref{e-est}
 of finite type
with $\phi$ an infinite sum
of squares of holomorphic functions.
\ec


We conclude this subsection
by mentioning applications of 
(1) effective subelliptic estimates
to local regularity with effective gain \cite{KN65},
 effective lower bound on the Bergman metric \cite{M92}
and control in the construction of peak functions
\cite{FM94}
and (2) effective Kohn's algorithm
(such as given by Theorem~\ref{p12})
to the construction of``bumping''
functions, Kobayashi metric estimates
and H\"older regularity of proper 
holomorphic maps \cite{DF79}.

\subsection{Triangular resolutions and effective meta-procedures}
In this section we introduce our main tools for establishing effectiveness.
Recall that the crucial lack of effectiveness in  (P2) (see Definition~\ref{k-hol}) is due to the order of the generated multiplier depending on the root order that can happen to be arbitrarily large.

To quantify this phenomenon, we call a procedure
{\em effective} if the order of the new multiplier
can be effectively estimated in terms of 
a quantity associated to the data that we call {\em a complexity}.
We don't seek complexities of individual multipliers 
but rather of their {\em finite tuples and tuples of their ideals,
or more precisely, their filtrations}.
That is, we define,
for a holomorphic map germ
given by a tuple of pre-multipliers,
and a filtration of ideals of multipliers,
the notion of triangular resolution as follows:

\bd
A {\em triangular resolution of length $k\ge 1$}
and multi-order $(\mu_1,\ldots,\mu_k)\in \N^k$
of a pair $(\G, \6I)$,
where 
$\G\colon (\cn,0)\to (\C^n,0)$
is a holomorphic map germ
and
$I_1\subset \ldots \subset I_k \subset \6O_{n,0}$
 a filtration $\6I$ of ideals,
is 
a system of holomorphic function germs
$(h_1,\ldots,h_k)$
satisfying
$$
	h_j = h_j(w_j, \ldots, w_n),
	\quad
	h_j\circ \G \in I_j,
	\quad
	\ord_{w_j} h_j = \mu_j,
	\quad
	1\le j\le k.
$$
\ed

\br\Label{proj}
Triangular resolutions can be 
given an
equivalent
coordinate-free description by means of pairs $(s,h)$,
where $s=(s_1,\ldots,s_k)$ is a sequence of holomorphic submersions 
$$
	(\C^n,0) 
	\mathop{\to}^{s_1} 
	(\C^{n-1},0)
	\mathop{\to}^{s_2} 
		\ldots 
	\mathop{\to}^{s_{k}} 
	(\C^{n-k},0)
$$
and 
$h=(h_1,\ldots, h_k)$ 
a $k$-tuple of
holomorphic function germs 
satisfying
$$
	h_j\in \6O_{\C^{m-j+1},0},
	\quad
	h_j\circ s_{j-1}\circ \ldots \circ s_1\circ \G \in I_j,
	\quad
	\mult(h_j, s_j) = \mu_j,
	\quad
	j=1,\ldots, k.
$$
\er

It is the {\em multi-order} of a triangular resolution
that plays the role of the complexity as mentioned above
in our effective procedures,
that we call here {\em meta-procedures}
in order to emphasize
that each of them is a constructed as a sequence 
of Kohn's original procedures (P1) and (P2).

\br
Since the multi-order depends on the resolution,
we can define
invariant polytopes $P_k = P_k(\G, \6I) \subset \R^k$
as convex hulls of 
all multi-orders arising in this way,
that can be used for a fine-grained
control of the effectiveness.
As the construction of triangular resolution
involves projections (submersions) to decreasing 
sequence of subspaces in the target space,
$P_k$ can be seen as dual to the Newton-Okounkov polytopes 
\cite{O96}, \cite{LM09, KK12},
where instead increasing sequences
of subvarieties are used to compute the vanishing orders.
\er

Our proof of the results from previous section
is based on the the following effective meta-procedures
involving triangular resolutions:

\bt\Label{meta}
For $0\le k\le n$, the following hold:
\begin{enumerate}
\item[(MP1)] (Selection of a partial Jacobian).
For any
$$
	f=(f_1,\ldots,f_k)\in (\6O_{n,0})^k,
		\quad 
	\psi=(\psi_{k+1}, \ldots, \psi_n)\in (\6O_{n,0})^{n-k},
		\quad
	\mult(f,\psi) \le \mu <\infty,
$$
there exist linear changes 
of the coordinates $z\in \C^n$
and 
of the components of $\psi$ in $\C^{n-k}$ 
such that for
the partial Jacobian determinant
$$
	J:= \frac{\d(\psi_{k+1},\ldots,\psi_n)}{\d(z_{k+1},\ldots,z_n)},
$$
the multiplicity
$
	\mult(f,J,\psi_{k+2}, \ldots, \psi_n)
$
is effectively bounded by a function depending only on 
$(n,\mu)$.

\item[(MP2)] (Selection of a triangular resolution).
For any 
$$
	f=(f_1,\ldots,f_k)\in (\6O_{n,0})^{k},
		\quad 
	\psi=(\psi_{1}, \ldots, \psi_n)\in (\6O_{n,0})^n,
$$
with
$$
	\mult(f_1,\ldots,f_j, \psi_{j+1}, \ldots, \psi_n)\le \mu < \infty,
	\quad
	0\le j\le n,
$$
there exists a triangular resolution $h=(h_1,\ldots,h_k)$ of $(\psi,\6I)$,
where $\6I$ is the filtration 
$$
	(f_1)\subset (f_1, f_2) \ldots \subset (f_1,\ldots, f_k),
$$
such that orders $\ord_{w_j}h_j$ are 
effectively bounded by functions depending only on 
$(n,\mu)$.

\item[(MP3)] (Jacobian extension in a triangular resolution).
For any 
$$
\G=(\phi,\psi)\in (\6O_{n,0})^{k}\times (\6O_{n,0})^{n-k},
$$
and filtration $\6I$ of ideals 
$I_1\subset \ldots \subset I_{k+1}\subset \6O_{n,0}$
satisfying 
$$
	I_{k+1}\subset I_k + (J),
$$ 
where $J$ is the Jacobian determinant of $\G$,
let $h=(h_1,\ldots, h_{k+1})$ be a triangular resolution
with
$$
	\ord_{z_j}h_j \le \mu <\infty,
	\quad
	1\le j\le k.
$$
Then 
$
	h_{k+1}\circ  \G
$
can be obtained
by holomorphic Kohn's procedures 
(P1) and (P2)
starting with the initial set of components of 
$\psi$
and the ideal $I_k$,
where the number of procedures and the root order in (P2)
are effectively bounded 
by a function depending only on $(n,\mu)$.
\end{enumerate}
\et

The proof for each of the statements in (MP1), (MP2) and (MP3)
will be provided in their more precise versions 
with explicit estimates
in Propositions~\ref{v}, \ref{triangle} and \ref{claim} respectively.
All three meta-procedures are subsequently combined
in Corollary~\ref{iter}
providing an iteration step in the main construction,
where one more multiplier is added to the system.
For reader's convenience, 
here is a self-contained variant of
Corollary~\ref{iter}:

\bc\Label{k}
	For any integer $\nu\ge 1$, initial system of pre-multipliers 
	$\psi_0=(\psi_{0,1}, \ldots, \psi_{0,n})$
	of finite type $ \le \nu$,
	and  $1\le k\le n$, there exist:
	\begin{enumerate}
	\item
		holomorphic coordinates $(z_1, \ldots, z_n)$ chosen among linear combinations of any given holomorphic coordinate system;
	\item
		systems of pre-multipliers
		$\psi_k = (\psi_{k,k+1}, \ldots, \psi_{k,n})$
		chosen among generic linear combination of the given ones,
		and associated maps
		$$
			\G_k(z):= (z_1, \ldots, z_k, \psi_{k,k+1}(z), \ldots, \psi_{k,n}(z));
		$$
	\item
		systems of multipliers
		$f_k = (f_{k,1}, \ldots, f_{k,k})$
		obtained
		via effective meta-procedures applied
		to $(\psi_{k-1}, f_{k-1} )$ (where $f_0$ is empty);
	\item
		decompositions
		of the form $f_{k,j} = Q_{k,j}\circ \G_k$, 
		$j=1, \ldots, k$,
		where 
		each $Q_{k,j}=Q_{k,j}(w_j, \ldots, w_n)$ is a holomorphic function 
		depending only on the last $n-j+1$ coordinates;
	\item 
		positive functions $\eps_{k,j}(n, \nu)>0$
		such that the order of subellipticity of each $f_{k,j}$ 
		is $\ge\eps_{k,j}(n, \nu)$.
	\end{enumerate}
\ec

Finally the estimates from
Theorem~\ref{p12}
are provided by
Corollary~\ref{compute}.

\br
Since all meta-procedures (MP1-3) in Theorem~\ref{meta} are purely algebraic, they can be applied to geometric problems beyond subelliptic estimates. For instance, consider a holomorphic map $\phi\colon V\to \C^m$
on a singular (not necessarily reduced) subvariety $V\subset\C^n$ 
given by a sheaf of ideals $\6I$, 
and look for canonical ways of resolving the singularity of $\phi$.
Here adding a Jacobian determinant in (P1) to the ideal sheaf $\6I$
can be regarded as a step in simplifying the singularity of $\phi$,
since in the regular case the Jacobian would be a unit.
The singularity itself can then be encoded
by trees whose edges correspond to
(MP1-3) and that are labeled by
 multi-orders of the triangular resolutions.
Such trees can be used to obtain 
more refined effective subelliptic estimates
and put in a broader context
of labeled trees encoding singularities,
see e.g.\ \cite{GGP19}.
\er

We conclude with
Section~\ref{example}
 illustrating
the use of our effective algorithm
for a concrete class of examples 
with arbitrary high order perturbations.

\medskip
{\bf Acknowledgement.}
We would like to thank Professors
J.P.~D'Angelo, X.~Huang and Y.-T.~Siu
for their interest and helpful comments.

\section{Preliminaries}
\subsection{Multiplicity and degree}
Denote by $\6O=\6O_{n,p}$
the ring of germs at a point $p$
of holomorphic functions in $\C^n$.
Since our considerations are 
for germs at a fixed point,
we shall assume $p=0$
unless specified otherwise.

Recall that an ideal
$\6I\subset\6O$
is of {\em finite type}
if $\dim\6O/\6I<\infty$,
or equivalently the (germ at $0$ of the) zero variety 
$\6V(\6I)$
is zero-dimensional at $0$.
In the latter case,
the classical {\em algebraic intersection multiplicity} of $\6I$
(see e.g.\ \cite[\S1.6, \S2.4]{Fu84})
is defined as
\beq
	\mult \6I := \dim \6O/\6I.
\eeq
Similarly, for a germ of holomorphic map
$\psi\colon(\C^n,0)\to (\C^n,0)$,
we have $\mult\psi := \mult (\psi)$,
where $(\psi)$ is the ideal generated
by the components of $\psi$,
and the quotient $\6O/(\psi)$ is the {\em local algebra of $\psi$}
(see e.g.\ \cite{AGV85}).
More generally (cf.~ \cite[\S2.4]{D93}), for every integer $0\le d < n$,
define the {\em $d$-multiplicity} by
\beq\Label{6I}
	\mult_d \6I := \min \dim  \6O/(\6I + (L_1, \ldots, L_d)),
\eeq
where the minimum is taken over
sets of $d$  linear functions $L_j$ on $\C^n$.
The same minimum is achieved
when $L_j$ are germs
of holomorphic functions with linearly independent differentials,
as can be easily shown by a change of coordinates
linearlizing the functions.
In particular, the $d$-multiplicity of an ideal is a {\em biholomorphic invariant}.
In a similar vein, given a collection 
$\phi=(\phi_1, \ldots, \phi_{n-d})\in \6O_n$,
of
$n-d$ function germs,
we write
\beq\Label{mult-d}
	\mult (\phi)
	=
	\mult (\phi_1, \ldots, \phi_{n-d}) 
	:= \min \dim  \6O/(\phi_1, \ldots, \phi_{n-d}, L_1, \ldots, L_d),
\eeq
where $L_j$ are as above.
That is, we will adopt the following convention:

\medskip

{\bf Convention.}
For every $0\le k\le n$ and a $k$-tuple 
of holomorphic function germs
$\phi_1,\ldots, \phi_k$,
their multiplicity 
$\mult(\phi_1,\ldots, \phi_k)$
is always assumed 
to be the $(n-k)$-multiplicity,
i.e.\ with $(n-k)$ generic linear functions
added to the ideal.
\medskip


Further recall that the {\em degree} 
$\deg(\psi)$
of a germ
(also called ``index" in \cite{AGV85}) 
of a finite holomorphic map
$\psi\colon(\C^n,0)\to (\C^n,0)$
is the minimum $m$ such that
$\psi$ restricts to a ramified $m$-sheeted covering
between neighborhoods of $0$ in $\C^n$.
%
Both integers are known to coincide (see e.g.\ \cite{ELT77, AGV85, D93}):

\begin{Thm}
[{\cite[\S4.3]{AGV85}}]
\Label{degree}	
Let $\psi\colon(\C^n,0)\to (\C^n,0)$ be germ of finite holomorphic map.
Then
$$
	\mult (\psi) = \deg \psi.
$$
\end{Thm}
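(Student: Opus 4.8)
The plan is to prove that both $\mult(\psi)$ and $\deg\psi$ equal the rank of $\6O_{n,0}$ regarded as a module over the subring $A:=\psi^{*}\6O_{n,0}$ of germs pulled back from the target. First I would fix a representative of $\psi$, still written $\psi$, on a small ball $B$ about $0$ in the source with $\psi^{-1}(0)\cap\overline{B}=\{0\}$, so that $\psi$ restricts to a proper surjective holomorphic map onto a ball $B'$ about $0$; by the definition of the degree, $\psi$ is then a branched covering of $d=\deg\psi$ sheets. Since $\psi$ is dominant, $\psi^{*}$ is injective, so $A\cong\6O_{n,0}$ is regular local of dimension $n$, and finiteness of $\psi$ says precisely that $A\hookrightarrow\6O_{n,0}$ is a finite ring extension. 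As $A$ is regular (hence Cohen--Macaulay) and $\6O_{n,0}$ is Cohen--Macaulay of the same dimension $n$, miracle flatness (a finite local homomorphism from a regular local ring to a Cohen--Macaulay local ring of equal dimension is flat) makes $\6O_{n,0}$ a flat, hence free, $A$-module of some rank $r$.

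Next I would identify $r$ with $\mult(\psi)$: the maximal ideal of $A$ is generated by the components of $\psi$ (the pullbacks of the coordinates, vanishing at $0$), so $\6O_{n,0}$ reduced modulo its extension to $\6O_{n,0}$ is $\6O_{n,0}/(\psi)$, a $\C$-vector space of dimension equal to the rank $r$; thus $\mult(\psi)=r$. To identify $r$ with $\deg\psi$, I would pass to the representatives: the direct image $\psi_{*}\6O_{B}$ is a coherent $\6O_{B'}$-module which is flat (the same miracle flatness at every point of $B'$), hence locally free of constant rank $r$ over the connected ball $B'$, so for each $c\in B'$ its fiber at $c$ has $\C$-dimension $r$; and since $\psi$ is finite that fiber equals $\bigoplus_{p\in\psi^{-1}(c)}\6O_{B,p}/(\psi-c)$. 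Hence $\sum_{p\in\psi^{-1}(c)}\dim_{\C}\6O_{B,p}/(\psi-c)=r$ for every $c$. Now $\det(\d\psi)$ is not identically zero (otherwise the fibers of $\psi$ could not be finite, by the rank theorem), so its zero set and hence its image $Z\subset B'$ is a proper analytic subset; for $c\in B'\setminus Z$ the fiber $\psi^{-1}(c)$ consists of exactly $d$ points, at each of which $\psi$ is a local biholomorphism, so every summand above is $1$ and the total is $d$. Comparing the two evaluations of the total gives $r=d$, and together with $\mult(\psi)=r$ this yields $\mult(\psi)=\deg\psi$.

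The step I expect to be the main obstacle is the flatness input together with its geometric reading: that a coherent flat sheaf over a connected base is locally free of a rank that may be computed on any fiber, and that for a finite morphism the fiber of the direct image splits as the direct sum of the local algebras $\6O_{B,p}/(\psi-c)$ over the preimages $p$. A self-contained analytic alternative, bypassing commutative algebra, is to express $\dim_{\C}\6O_{n,0}/(\psi-c)$ as the Grothendieck-type integral $(2\pi i)^{-n}\int_{\Gamma}\det(\d\psi)\,dz_{1}\wedge\cdots\wedge dz_{n}\big/\big((\psi_{1}-c_{1})\cdots(\psi_{n}-c_{n})\big)$ over the cycle $\Gamma=\{\,|\psi_{j}-c_{j}|=\eps_{j}\,\}$ with $\eps$ fixed small, to note that this depends continuously on small $c$ while taking integer values, hence is locally constant, and to evaluate it at a generic $c$ as above; this reduces Theorem~\ref{degree} to the classical residue formula for the colength of a complete intersection ideal.
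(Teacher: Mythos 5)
Your proof is correct, but there is nothing in the paper to compare it with: Theorem~\ref{degree} is not proved there, it is quoted as a classical fact from \cite[\S4.3]{AGV85}. What you give is the standard self-contained argument: the local finite mapping theorem makes $\6O_{n,0}$ a finite module over $A=\psi^*\6O_{n,0}$; miracle flatness (regular base, Cohen--Macaulay source, zero-dimensional special fiber, equal dimensions) upgrades this to freeness of some rank $r$; reducing modulo $\5m_A$, whose extension is exactly $(\psi_1,\ldots,\psi_n)$, gives $r=\dim_\C\6O_{n,0}/(\psi)=\mult(\psi)$; and local freeness of the coherent direct image $\psi_*\6O_B$ over a connected ball lets you evaluate $r$ on a fiber over a point outside the (proper analytic) image of $\{\det d\psi=0\}$, where each of the $d=\deg\psi$ preimages contributes a one-dimensional summand, so $r=d$. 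The background facts you lean on (finiteness of $\6O_{n,0}$ over $A$, coherence of $\psi_*\6O_B$, the fiber decomposition $\bigoplus_{p\in\psi^{-1}(c)}\6O_{B,p}/(\psi-c)$, connectedness of the complement of an analytic set) are all standard and correctly deployed, so as a replacement for the citation the argument is sound; your residue-integral alternative is likewise a classical route to the same statement.

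One small imprecision in the alternative sketch: for $c\ne 0$ the integral over $\Gamma$ computes the total colength $\sum_{p\in\psi^{-1}(c)\cap B}\dim_\C\6O_{B,p}/(\psi-c)$ over all preimages in the polydisc, not $\dim_\C\6O_{n,0}/(\psi-c)$ of the single germ at $0$ (which vanishes for $c\ne0$). Since that total is precisely the locally constant quantity you want to evaluate at a generic $c$, the argument is unaffected, but the statement should be phrased accordingly.
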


\subsection{Semi-continuity of multiplicity and application}
The following lemma is straightforward consequence of the definition of multiplicity:
\bl\Label{semi}
Let $\psi_t \colon  (\C^m,0)\to (\C^k,0)$
be a continuous family of germs of holomorphic maps,
in the sense that all coefficients of the power series expansion of $\psi_t$
depend continuously on $t\in \R^m$.
Then $\mult(\psi_t)$ is upper semicontinuous in $t$.
\el

In the following we keep using the notation \eqref{mult-d}.

\bc\Label{ineq}
For every germs
$$
	(f,g)\colon (\C^{n+m}, 0) \to (\C^n, 0) \times (\C^m, 0),
$$
we have
$$
	\mult f \le \mult (f,g).
$$
\ec

\bpf
Consider the family $g_t := g + t(z_1, \ldots, z_m)$.
Then Lemma~\ref{semi} implies that
$$
	\mult (f, g_t) \le \mult (f,g)
$$
 for $t$ near $0$.
 Choosing $t$ such that $g_t$ is immersive at $0$,
 we can change holomorphic coordinates in $\C^{n+m}$
 to make $g_t$ linear.
 Then the desired conclusion follows from the definition of multiplicity.
\epf

\subsection{Siu's lemma on effective mixed multiplicity}
An important ingredient is the following 
consequence from Siu's lemma
on selection of linear combinations
of holomorphic functions
 for effective multiplicity
 \cite[(III.3)]{S10}
 combined with effective comparison
 of the invariants of holomorphic map germs
 \cite[(I.3-4)]{S10}
 (see also \cite[\S2.2]{D93}):
\bl[Siu]\Label{siu}
Let $0\le j\le q\le n\le N$,
and
 $f_1,\ldots, f_j, F_1, \ldots, F_N$
be holomorphic function germs in $\6O_{n,0}$ 
such that
$$
	\mu:= \mult(f_1, \ldots, f_j) <\infty,
	\quad
	\nu:=\mult(F_1,\ldots, F_N) <\infty.
$$
Then
$$
	\mult(f_1, \ldots, f_j, G_1, \ldots, G_{n-q}) 
	\le
	\mu  \nu^{n-q}
$$
holds for generic linear combinations
$G_1, \ldots, G_{n-q}$ of $F_j$'s.
\el

\section{Effective Nullstellensatz}

We shall need the following effectiveness lemma
that essentially follows the lines of Heier~\cite{Heier},
(see also \cite{LT08} for related techniques):
\begin{Lem}[Effective Nullstellensatz]\Label{radical}
Let $\phi_1,\ldots,\phi_k,f\in \mathcal{O}_{n,0}$
satisfy
\begin{equation*}
	\mu := \mult(\phi_1,\ldots,\phi_k)<\infty,
		\quad
	f\in \sqrt{(\phi_1,\ldots,\phi_k)}.
\end{equation*}
Then
$$
	f^{n\mu }\in (\phi_1,\ldots,\phi_k).
$$
\end{Lem}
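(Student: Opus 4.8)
The plan is to reduce the statement to a quantitative Nullstellensatz with a degree bound that depends only on $n$ and $\mu$, rather than on the individual degrees of the $\phi_i$. The obstruction to using off-the-shelf effective Nullstellensatz results (Brian\c{c}on--Skoda, Koll\'ar, etc.) directly is that those bounds are stated in terms of the degrees of the generators, whereas here we only control the multiplicity $\mu=\mult(\phi_1,\ldots,\phi_k)$. So the first step is to replace the original generators by a controlled set. Since $\mu<\infty$, the ideal $(\phi_1,\ldots,\phi_k)$ is $\mathfrak{m}$-primary, so $\mathfrak{m}^{\,s}\subset(\phi_1,\ldots,\phi_k)$ for some $s$; in fact one may take $s\le \mu$ by the standard fact that $\dim \6O/(\phi_1,\ldots,\phi_k)=\mu$ forces $\mathfrak{m}^{\mu}\subset(\phi_1,\ldots,\phi_k)$ (the chain $\6O\supset\mathfrak m\supset\mathfrak m^2\supset\cdots$ must stabilize modulo the ideal within $\mu$ steps). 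This already replaces the $\phi_i$ by the finitely many monomials of degree $\mu$ as far as membership questions of high powers are concerned, at the cost of a controlled number of generators of controlled degree.

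Next, following Heier~\cite{Heier}, I would pass to the algebraic/projective setting: after a generic linear change of coordinates, $\phi_1,\ldots,\phi_k$ together with the coordinate functions (or rather with enough generic linear combinations) define, locally, a zero-dimensional scheme of length $\le\mu$ supported at $0$. One can then choose $n$ elements $g_1,\ldots,g_n$ among generic linear combinations of the $\phi_i$ and the monomials above forming a system of parameters, with $(g_1,\ldots,g_n)$ of colength $\le \mu$ (using Siu's Lemma~\ref{siu} to keep the multiplicity under control under taking generic linear combinations). For a complete intersection $(g_1,\ldots,g_n)$ of local multiplicity $\mu$, the Brian\c{c}on--Skoda theorem in its sharp local form gives $\overline{(g_1,\ldots,g_n)}^{\,n}\subset(g_1,\ldots,g_n)$, and since $f$ vanishes on the common zero locus, $f\in\sqrt{(g_1,\ldots,g_n)}$, hence $f^{\mu}\in\overline{(g_1,\ldots,g_n)}$ by the valuative criterion for integral closure together with the colength bound. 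Combining, $f^{n\mu}\in(g_1,\ldots,g_n)\subset(\phi_1,\ldots,\phi_k)$ — except that the last inclusion need not hold on the nose, since the $g_i$ involve the extra monomials from $\mathfrak m^{\mu}$.

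To repair this, I would instead argue more carefully with the ideal $(\phi_1,\ldots,\phi_k)$ itself. The cleanest route: since $\mathfrak m^{\mu}\subset I:=(\phi_1,\ldots,\phi_k)$ and $f\in\mathfrak m$ (as $f\in\sqrt I\subset\mathfrak m$), we get $f^{\mu}\in\mathfrak m^{\mu}\subset I$ immediately. But that gives exponent $\mu$, better than $n\mu$, which is suspicious — so the subtlety must be that the hypothesis is $f\in\sqrt I$ without $f(0)=0$ being automatic; however $\sqrt I\subset\mathfrak m$ does hold because $\6V(I)=\{0\}$. Re-examining: the point of the $n$ in $n\mu$ is presumably that one does \emph{not} want to invoke $\mathfrak m^{\mu}\subset I$ (which is true but whose proof already uses finite-dimensionality in a way the authors may want to avoid, or the bound $\mu$ for the stabilization is what needs justifying). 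So the actual plan I would follow is: (i) prove $\mathfrak m^{\mu}\subset I$ by the descending-chain argument on $\6O/I$, noting the chain $\6O/I\supset (\mathfrak m+I)/I\supset(\mathfrak m^2+I)/I\supset\cdots$ has length $\le\mu$ and stabilizes at $0$ by Nakayama; (ii) observe $f\in\sqrt I$ and $\6V(I)=\{0\}$ give $f\in\mathfrak m$; (iii) conclude $f^{\mu}\in I$, hence a fortiori $f^{n\mu}\in I$. The main obstacle, then, is purely bookkeeping: confirming that the descending-chain/Nakayama argument indeed yields stabilization within $\mu=\dim\6O/I$ steps (it does: each strict inclusion drops the dimension by at least one, and once $\mathfrak m^j+I=\mathfrak m^{j+1}+I$ Nakayama forces $\mathfrak m^j\subset I$), and matching this with the precise statement $f^{n\mu}\in I$, which is simply a weaker (cleaner, dimension-uniform) consequence that the authors state in this form for later use.
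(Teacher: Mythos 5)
There is a genuine gap, and it lies exactly at the point you yourself flagged as ``suspicious.'' You resolved the suspicion the wrong way: the hypothesis $\mu=\mult(\phi_1,\ldots,\phi_k)<\infty$ does \emph{not} mean $\dim\6O_{n,0}/(\phi_1,\ldots,\phi_k)=\mu<\infty$. By the Convention in Section~2 (see \eqref{mult-d}), the multiplicity of a $k$-tuple is always the $(n-k)$-multiplicity, i.e.\ the minimal colength of $(\phi_1,\ldots,\phi_k,L_{k+1},\ldots,L_n)$ after adding $n-k$ \emph{generic linear} functions. In the situations where the lemma is applied (e.g.\ in Proposition~\ref{triangle}), the zero variety $\mathcal{V}(\phi_1,\ldots,\phi_k)$ has positive dimension $n-k$, so the ideal $I=(\phi_1,\ldots,\phi_k)$ is not $\mathfrak{m}$-primary, $\mathcal{V}(I)\ne\{0\}$, and no power of $\mathfrak{m}$ is contained in $I$. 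Consequently your ``cleanest route'' (i)--(iii) collapses: step (i) ($\mathfrak{m}^{\mu}\subset I$) is false, and step (ii) wrongly assumes $\mathcal{V}(I)=\{0\}$. The same misreading undermines the first half of your plan: the $g_1,\ldots,g_n$ you build are not inside $(\phi_1,\ldots,\phi_k)$ precisely because you must adjoin the linear slices, and membership in $(g_1,\ldots,g_n)$ gives no membership in $I$. If the ideal really were $\mathfrak{m}$-primary of colength $\mu$, the statement would indeed be the triviality $f^{\mu}\in I$, and the paper would not need a proof of this length.

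What the paper actually does is handle the positive-dimensional zero locus: it bounds Heier's type $\mathcal{T}(\phi)=\sup_{\gamma}\ord(\gamma^*(\phi))/\ord(\gamma^*(\sqrt{(\phi)}))$ (supremum over curves through points of $\mathcal{V}(\phi)$ near $0$) by $\mu$, and then invokes Heier's effective Nullstellensatz \cite[Theorem~3.2]{Heier} to get $f^{n\mathcal{T}(\phi)}\in(\phi)$, hence $f^{n\mu}\in(\phi)$. The bound $\mathcal{T}(\phi)\le\mu$ is the real work: at points $p$ of the zero set near $0$, semicontinuity gives colength $\le\mu$ for the ideal plus generic linear forms through $p$, which controls the order ratio along curves cut out by generic linear combinations $\psi_1,\ldots,\psi_{k-1}$ of the $\phi_i$ and generic linear slices; one then shows, via the normalized blow-up of $(\phi)$, Bertini, and \cite[Proposition~3.4]{Heier}, that curves of this special form already realize the maximum $r_1/m_1$ computing $\mathcal{T}(\phi)$. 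The factor $n$ in $n\mu$ is the Brian\c con--Skoda-type exponent inside Heier's theorem, not slack added to a trivial $\mathfrak{m}$-primary bound. To repair your proposal you would need to replace steps (i)--(iii) by an argument of this kind (type/integral-closure bound on the possibly positive-dimensional zero locus), not by the descending chain/Nakayama argument, which simply does not apply here.
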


\begin{proof}
Let $(\phi):=(\phi_1,\ldots,\phi_k)$ be the ideal of $\mathcal{O}_n$ generated by $\phi_1,\ldots,\phi_k$.
By the definition of multiplicity, 
for a generic choice
of linear functions $L_{k+1},\ldots, L_n$, one has
\begin{equation*}
\mu =
\text{\rm mult}(\phi_1,\ldots,\phi_k)
	=
\text{\rm mult}(\phi_1,\ldots,\phi_k, L_{k+1},\ldots, L_n).
\end{equation*}
By the semicontinuity of multiplicity, 
there exists a neighborhood $U$ of $0$ such that the multiplicity of 
$(\phi_1,\ldots,\phi_k)$ at $p\in U\cap\{\phi=0\}$ is less or equal to $\mu$.
After shrinking $U$ if necessary, we may assume that there exist finite number of generators $h_1,\ldots,h_m$ of $\sqrt{(\phi_1,\ldots,\phi_k)}$
such that each $h_j$ is holomorphic on $U$.

Let $p\in U\cap\{\phi=0\}$. 
Choose $k$-dimensional generic linear subspace passing through $p$ defined by $L^p_{k+1},\ldots,L_n^p$ such that
$$
	\mult(\phi_1,\ldots,\phi_k,L_{k+1}^p,\ldots, L_n^p)\leq \mu.
$$
Choose generic linear combinations  $\psi_1,\ldots, \psi_{k-1}$ of components of $\phi$ and define irreducible curves 
$C_1,\ldots,C_\delta$ by
$$
    \bigcup_j C_j=\mathcal{V}(\psi_1,\ldots,\psi_{k-1},L_{k+1}^p,\ldots,L_n^p).
$$
Let $\gamma_j:(\Delta,0) \to (C_j,p)$ be a
local parametrization of $C_j$ at $p\in C_j\cap \{\phi=0\}$.
Since 
$$
	h^\mu\in (\phi_1,\ldots,\phi_k, L_{k+1}^p,\ldots,L_n^p)
$$ 
for any $h\in \mathcal{O}_{n,p}$ with $h(p)=0$ 
(see e.g.\ \cite[\S2.3.3]{D93},
\cite[Lemma I.5]{S10}),
we obtain for each $j=1,\ldots,\delta$, and $\ell=1,\ldots,m$,
$$
\mu \cdot \ord(h_\ell\circ \gamma_j)=
	\ord(h^\mu_\ell \circ \gamma_j)
    	\geq
\min_{\ell=1,\ldots,k} \ord(\phi_\ell\circ \gamma_j).
$$
Hence
$$
    \tau_p:=\max_j \frac{
    	\ord(\gamma_j^*(\phi))
	}{
	\ord(\gamma_j^*(\sqrt{(\phi)}))}\leq \mu,
$$
where $(\phi)$ denotes the generated ideal as before.

As in Heier~\cite{Heier}, define the type $\mathcal{T}(\phi)$ by
$$
   \mathcal{T}(\phi):=\sup_{\gamma\in \Gamma_0 }\left\{\frac{\ord(\gamma^*(\phi))}{\ord(\gamma^*(\sqrt{(\phi)}))}\right\},
$$
where $\Gamma_0$ is the set of germs of local holomorphic curves $\gamma:(\Delta,0)\to (\mathbb{C}^n,p)$ with $p$ in the zero set of $(\phi)$  whose image is not contained in
$\mathcal{V}(\phi).$
We claim that for sufficiently small $U$, 
\beq\Label{ttp}
   \sup_{p\in U}\tau_p=\mathcal{T}(\phi)
\eeq
for a generic choice of $\psi=(\psi_1,\ldots, \psi_{k-1})$ and $L_{k+1}^p,\ldots,L_n^p$. 
Then the desired statement follows from  \cite[Theorem 3.2]{Heier}.

To prove the claim \eqref{ttp}, we follow \cite[Section 3.2]{Heier} by letting
$$
	Bl_\phi(U) \to U
$$
be the blowing up of the ideal $(\phi)$ on an open neighborhood $U$ of
$0\in \mathbb{C}^n$ and let $X^+ \to Bl_\phi(U)$ be the normalization of the blowing up.
Then there exists an effective Cartier divisor $F$ such that
$$
   (\phi) \cdot \mathcal{O}_{X^+}=\mathcal{O}_{X^+}(-F),
$$
where $(\phi)$ is identified with its pullback.
Write
$$
F=\sum _{i=1}^s r_i E_i.
$$
After shrinking $U$ if necessary, we may assume that 
$$\bigcap_i \sigma(E_i)\ni 0,$$
where $\sigma:X^+\to U$ is the natural map given by blowing up and normalization.
Let 
$$
	m_i:=\ord_{E_i}(h_1\circ \sigma,\ldots,  h_m\circ \sigma),
	~i=1,\ldots,s
$$
be the vanishing order of the ideal 
$(h_1\circ \sigma,\ldots,h_m\circ \sigma)$ at 
generic points of $E_i$.
By Proposition 3.4 of \cite{Heier},
\begin{equation}\label{heier-t}
	\mathcal{T}(\phi)= \max_i \left\{\frac{r_i}{m_i}\right\}.
\end{equation}
Then claim can be proved 
by following the arguments of the proof of Theorem 3.5 of \cite{Heier}.
For reader's convenience, we include details as follows.

Assume that 
$$
	\frac{r_1}{m_1}=\max\left\{\frac{r_i}{m_i}\right\}.
$$
Choose a function $g$ whose divisor is $E_1$:
$$
	E_1={\rm div}(g).
$$
Then for generic $\tilde p\in E_1$ and a curve $\tilde\gamma$ whose image is not contained in $E_1$ and $\tilde\gamma(0)=\tilde p$, we obtain
\begin{equation}\label{v-ord}
	\frac{\ord(\tilde\gamma^*(\phi\circ\sigma))}
	{ \ord(\tilde\gamma^*(h\circ\sigma))}
	=\frac{\ord(\tilde\gamma^* (g^{ r_1}))}
	{\ord(\tilde\gamma^*(g^{ m_1}))}
	=\frac{r_1}{m_1}.
\end{equation}

Let $p=\sigma(\tilde p)\in\{\phi=0\}\cap U$ and let $\gamma_j\colon (\mathbb{C},0)\to (C_j, p)$, $j=1,\ldots,\delta$ be as before.   Let
$$
	D_j={\rm div}(\psi_j),~\sigma^*(D_j)=F+\widetilde D_j,\quad j=1,\ldots,k-1
$$
and let
$$
	G_j={\rm div}(L_{j}^p),~\sigma^*(G_j)=\widetilde G_j,\quad j=k+1,\ldots,n.
$$
Then
$$	
	\widetilde D_1\cap\cdots\cap \widetilde D_{k-1}\cap \widetilde G_{k+1}\cap\cdots\cap \widetilde G_n
$$
is a union of curves. Since normalization $X^+ \to Bl_\phi(U)$ is a finite map, by Bertini's Theorem, for generic choice of $\tilde p\in E_1$, $\psi_1,\ldots,\psi_{k-1}$ and $L^p_{k+1},\ldots,L_n^p$, one of the curves, say $\tilde \gamma_1$, the lift of $\gamma_1$,
is a smooth curve meeting $ E_1$ at some generic point $\tilde\gamma_1(0)\in \sigma^{-1}(p)$. 
Since $\gamma_1=\sigma\circ \tilde \gamma_1$ , we obtain
$$
	\frac{\ord(\gamma^*_1(\phi))}
	{\ord(\gamma_1^*(h))}
	=\frac{\ord(\tilde\gamma^*_1(\phi\circ\sigma))}
	{\ord(\tilde \gamma_1^*(h\circ\sigma))}
	=\frac{r_1}{m_1}
$$
achieving the maximum in \eqref{heier-t}
proving \eqref{ttp} as desired.
\end{proof}

\section{Multiplicity estimates for Jacobian determinants}


We have the following more precise version
of the meta-procedure (MP1) in Theorem~\ref{meta}
that can be of independent interest:

\bp[Selection of a partial Jacobian determinant]\Label{v}
Let
$$(f,\psi)\colon (\C^{m+d},0) \to (\C^m\times\C^d ,0),
    \quad
    m, d\ge 1,
 $$
be a finite holomorphic map germ.
Then after a linear change of holomorphic coordinates
$(z_1,\ldots,z_{m+d})$ around $0$, one has
\beq\Label{min}
         \mult\left(
         	f, 
		\frac{
			\partial(\psi_1,\ldots,\psi_d)
		}{
			\partial (z_{1},\ldots,z_{d})
		}
	\right)
         	\le 
	d\cdot \mult f \cdot \mult(f,\psi).
\eeq
\ep

\bpf
Let
$$
    V:=\{f=0\}\subset \mathbb{C}^{m+d}
$$
be the germ at $0$ of a subvariety defined by $f$.
By additivity of the intersection number, we may assume that $V$ is irreducible.
By Theorem 2.0.2 of \cite{W}, 
we can choose an embedded desingularization
$r\colon \2M\to U$, where $U$ is a neighborhood of $0$ in $\C^{m+d}$,
with exceptional divisor $E\subset \2M$,
such that the strict transform $\2V\subset \2M$ of $V$ is regular.
We write 
$\sigma\colon\widetilde V\to V$ for the restriction of $r$. 
Since $V$ is of pure dimension $d$, $\widetilde V$ is a smooth variety of dimension $d$.
Let
$$
	E=\sum_j c_j E_j,
$$
where $E_j$ are irreducible. Note that $\widetilde V$ has only simple normal crossings with $E$.
Since the desingularization $r$ of \cite{W} is given by a sequence of blow-ups at smooth center which has simple normal crossings with exceptional divisors of the previous blow-up, 
we may assume $E_j$ is smooth. 
Note further that the blow-up centers of $\sigma$ are disjoint from the set $V_{reg}$ of points where $V$ is smooth.

Let 
$$
	\mu:=\mult(f,\psi)
$$
and
$$
	\Psi:=\psi\circ \sigma\colon \widetilde V \to \C^d.
$$
Choose open neighborhoods $W_j\subset \widetilde V$ of $E_j\cap \widetilde V$. Then
$\cup_j \sigma(W_j)$ is an open neighborhood of $0$ in $V$ and therefore, $\cup_j \Psi(W_j)$ is an open neighborhood of $0\in \mathbb{C}^d$. Since there are only finite number of divisors $E_j$, there exists $j_0$ such that $\Psi(W_{j_0})$ contains an open sector attached to $0$.
Since each blow-ups has exceptional divisors of the form $M\times C$, where $C$ is a smooth center and $M$ is a compact manifold,  
we can cover $E_{j_0}$ with finite number of open sets of the polydisc form and at least one of them maps to a set containing open sector attached to $0\in \mathbb{C}^d$ via $\Psi.$

Consider $\Psi^{-1}(0). $
Then $\Psi^{-1}(0)$ is a subvariety in $\widetilde V$ of codimension $k\leq d$. Choose a regular point $p_0\in \Psi^{-1}(0)$ and a local coordinate system 
$(x,y)=(x_1,\ldots,x_k, y_{k+1},\ldots,y_{d})$ centered at $p_0=(0,\ldots,0)$  such that
$$
	\Psi^{-1}(0)=\{x_1=\cdots=x_k=0\}.
$$
Assume that the image of a neighborhood 
$\{(x,y): x\in U_1, y\in U_2\} $ of $p_0\in \widetilde V$ via $\Psi$ contains an open sector 
attached to $0$.
For $y\in U_2$, define
$$
	N_y=\{(x, y):x \in U_1\}\subset \widetilde V.
$$
Then for generic $y$, the map
$\sigma_y:=\sigma_{N_y}:N_{y}\to V$
is one to one
and 
$\Psi_y:=\Psi_{N_y}:N_y\to \Psi(N_y)$ is a finite branched holomorphic covering of sheet number $\leq \mu.$  

First we will show that
\beq\Label{finite}
	\dim \mathcal{O}_{k}/(\Psi_y)\leq \mu,
\eeq
where $(\Psi_y)$ is an ideal generated by the components of $\Psi_y$. It is enough to show that
$$
	\dim \mathcal{O}_{k}/(\Psi_0)\leq \mu.
$$
Choose a regular curve $\gamma:(\mathbb{C},0)\to (N_0, 0)$. Suppose
$$
	\dim \mathcal{O}_1/\gamma^*(\Psi_0) >\mu.
$$
Then the equation $\Psi_0(\gamma(\zeta))=\Psi_0(\gamma(a))$ has at least $\mu+1$ solutions for generic $a\in \mathbb{C}$ sufficiently close to $0$,
which contradicts the assumption that $\Psi_0$ is at most $\mu$ to one. 
Therefore \eqref{finite} holds.
In particular, 
$$
	x_i^\mu\in (\Psi_y),
	\quad
	i=1,\ldots,k,
$$
which implies
$$
   \|x\|^\mu \le c(y)\|\Psi_y(x))\|,\quad (x,y)\in N_y
$$
for some function $c(y)$. By considering the expression of $x_i^\mu$ in terms of the components of $\Psi_y$, we can choose a point $y_0$ sufficiently close to $0$ and a neighborhood $U_2'\subset U_2$ of $y_0$ such that
\beq\Label{q}
	   \|x\|^\mu \le c_0\|\Psi_y(x))\|,\quad (x,y)\in N_y,\quad y\in U'_2
\eeq
for some constant $c_0$.
	 
Let $J(x,y)$ be the Jacobian determinant of $\Psi.$ Since $J\equiv 0$ on $\Psi^{-1}(0)$, $J$ is in the ideal generated by $x=(x_1,\ldots,x_k)$.
Consider the power series expansion 
\beq\Label{J-exp}
	J(x,y)=\sum_\a c_\a(y)x^\a
\eeq
at $(0, y_0)$.

We will show:

{\bf Claim.} There exists $\a$ with $\|\a\|\leq d \mu$ such that $c_\a\not\equiv 0.$

Assuming the claim does not hold, after shrinking $U_1$ and $U'_2$ if necessary, we obtain 
\beq\Label{assumption}
	|J(x,y))\leq c_1 \|x\|^T,\quad (x,y)\in U_1\times U'_2
\eeq
for some $T> d\mu $, where the constant $c_1$ is independent of $x$ and $y$.
Choose a sector 
$$
	S\subset\Psi(U_1\times U_2')\backslash \Psi(\{ J= 0\}\cup E).
$$
Then 
\beq\Label{volume}
    |
    	J(x,y)
    |
    \le c_1\|x\|^{T},
    \quad
    x\in (\Psi)^{-1}(S).
\eeq
Let $\omega$ and $\widetilde \omega$ be standard volume forms on $\C^d$
and $\widetilde V$ respectively.
Then \eqref{volume} implies
$$
    (\Psi^*\omega)(x,y) \le c_2 \|x\|^{2T} \cdot \widetilde\omega,
    \quad
   (x,y)\in (\Psi)^{-1}(S).
$$
Write $B_{\eps}\subset \mathbb{C}^d$ for the ball of radius $\eps$ centered at $0$
and consider the sectorial region
$$
    D_\eps := S\cap (B_{2\eps}\setminus \1{B_\eps}) \subset \C^d,
$$
whose volume satisfies
\beq\Label{vol}
    {\rm volume}(D_\eps) = \int_{D_\eps} \omega \ge c_3 \eps^{2d}.
\eeq
Since the volume of $\delta$-ball in $U_1$
 does not exceed the volume of the $\delta$-ball in $\C^k$ up
 to a constant,
 we have
\beq\Label{del}
    \int_{\Psi^{-1}(D_\eps)} \Psi^*\omega \le c_4 \delta^{2T}
    \int_{B_\delta\times U_2'} \widetilde \omega
    \le c_5  \delta^{2T+2k},
\eeq
for all $\delta$ with $\Psi^{-1}(D_\eps)\subset B_\delta\times U'_2.$
In view of \eqref{q}, we can choose $\delta = c_6 \eps^{1/\mu}$ in \eqref{del}.
Then, together with \eqref{vol}, we obtain
$$
    \eps^{2d} \le c_8 \eps^{(2T+2k)/\mu}
$$
for any sufficiently small $\eps>0$, which is a contradiction proving the claim.

Since $J$ is holomorphic, there exist an integer $m\leq d\mu$ and a variety $ X$ such that for each $p\in \Psi^{-1}(0)\setminus X$, $\Psi^{-1}(0)$ is smooth at $p$ and there exists 
a holomorphic coordinates $(x,y)=(x_1,\ldots,x_k, y_{k+1},\ldots,y_d)$ at $p$ such that 
$$
	\Psi^{-1}(0)=\{x=0\}
$$
and
$$
	J(x,y)=x_1^m+\sum_{\ell<m}c_\ell(x', y)x_1^\ell
$$
where $x'=(x_2,\ldots,x_k)$.	 
Choose a point $p_1\in \Psi^{-1}(0)\setminus X$ and a generic smooth holomorphic curve $\gamma:(\mathbb{C},0)\to(\widetilde V, p_1)$
such that 
$$
	\ord(J\circ \gamma)=m.
$$
By chain rule, we obtain
$$
   \sigma^*( d\psi_1\wedge\cdots\wedge d\psi_d\big|_{V})=J dw_1\wedge\cdots\wedge dw_d,
$$
where $w=(w_1,\ldots,w_d)$ is a coordinate system of $\widetilde V$ centered at $p_1$. Therefore we obtain
$$
    \sum_{(i_1,\ldots,i_d)}\frac{\partial(\psi_1,\ldots,\psi_d)}{\partial (z_{i_1},\ldots,z_{i_d})}
    (\sigma\circ\gamma(\zeta))
    \frac{\partial(\Sigma_{i_1},\ldots,\Sigma_{i_d})}{\partial(w_1,\ldots,w_d)}(\gamma(\zeta))
    =
    J(\gamma(\zeta)),
$$
where $\sigma=(\Sigma_1,\ldots,\Sigma_{m+d})$.
Since $\sigma$ is one to one outside the exceptional divisor of $\sigma$, we can choose $(i_1,\ldots,i_d)$ such that
$$
    \frac{\partial(\Sigma_{i_1},\ldots,\Sigma_{i_d})}{\partial(w_1,\ldots,w_d)}(\gamma)\not\equiv 0
$$
and
\beq\Label{local-coordinates}
    \ord\left(\frac{\partial(\psi_1,\ldots,\psi_d)}{\partial (z_{i_1},\ldots,z_{i_d})}
    (\sigma\circ\gamma)\right)
    \leq d\mu.
\eeq

Let 
$$
	\nu:=\mult f.
$$
Consider $(m+1)$-dimensional submanifold
$L\subset \mathbb{C}^{m+d}$ passing through $0$
such that 
$\mult f$ equals the multiplicity of $f|_L$ at $0$. 
Since the singular locus of $V$ is of codimension $\geq m+1$ in $\C^{m+d}$, 
we may assume
$$
	L\cap V\subset V_{reg}\cup \{0\}
$$ 
and therefore $\widetilde V\cap \widetilde L\cap E$ is a finite set, where $\widetilde L$ is the strict transform of $L$ under $\sigma$. In fact, 
$\widetilde V\cap \widetilde L\cap E$ has at most $\nu$ elements.
Since all smooth centers of $\sigma$ are disjoint from $V_{reg}$, we may further assume that 
$L$ intersects smooth centers only at $0$
and only transversally and hence each connected component of $\widetilde L$ is smooth.

We claim that for $L$ with generic tangent space $T_0L$, 
$\widetilde V\cap\widetilde L$ is a disjoint union of at most 
$\nu$ smooth curves that intersect transversally with $\widetilde V\cap E$.
Indeed,
let $A$ be the space of all $\nu$-jets at $0$ of all holomorphic curve germs $\gamma\colon(\C, 0)\to ( \2M, \gamma(0))$ such that $\gamma(0) \in \widetilde V\cap E$. 
Then $A$ is a subvariety in the jet space of all $\nu$-jets of holomorphic curves and 
the subset $B$ of all $\nu$-jets of curves $\gamma$ which are
either singular or intersecting tangentially with $\widetilde V\cap E$ at $\gamma(0)$ is a proper subvariety.
Let $p_0$ be a smooth point of $E\cap \widetilde V$. Choose a germ of a smooth curve $\gamma:(\mathbb{C},0)\to (\widetilde V, p_0)$ which intersects transversally with
$\widetilde V\cap E$ at $p_0$, 
i.e.\ $j^\nu_0\gamma\ \in A\setminus B$.
 Since $\sigma\circ \gamma$ is a holomorphic curve in $V$ passing through $0$, 
we can choose a set of holomorphic functions 
$\phi_j$, $j=1,\ldots,d-1$, such that
$\{f=\phi=0\}$ is $1$-dimensional and 
$$
	{\rm image}(\sigma\circ\gamma)\subset \{f=\phi=0\},
$$ 
where $\phi=(\phi_1,\ldots,\phi_{d-1})$. 
Then after a small perturbation of the differential of $\phi$,
we may assume that 
$L=\{\phi =0\}$ 
is smooth and $\{f=\phi=0\}$ 
contains a curve $\sigma\circ \gamma$ with 
\ $j^\nu_0\gamma\ \in A\setminus B$,
and all other components are $1$-dimensional.
Since $V$ is irreducible, any other component of $L\cap V$
can be connected with $\sigma\circ\gamma$ via paths in $V$.
Hence any two components can be connected
via a deformation of $\phi$ as regular function of its differential at $0$
that can be used as parameter,
proving the claim.

Let $X$ be as before. Then $X$ is a proper subvariety of $\widetilde V\cap E$. Let 
$C$ be the subset of $A$ consisting of $\nu$-jets of curves $\gamma$ such that $\gamma(0)\in X$. Then $C$ is a proper subvariety of $A$. 
Since the union of all linear subspaces covers $\mathbb{C}^{m+d}$, there exists $L$ such that the $\nu$-jet at $\sigma^{-1}(0)$ of the 
lifting of a curve in $L\cap V$ under $\sigma$ is not contained in $C$. Let
$\nu_0\leq \nu$ be the number of irreducible components of $V\cap L$ for generic linear subspace of codimension $d-1$.
Then for a generic linear subspace $L$ of codimension $d-1$ we can define 
a map $j^\nu_0:\{L\}\to A\times\cdots\times A$ by
$$
	j^\nu_0(L):=(j_{0}^\nu \gamma_1,\ldots,j^\nu_{0}\gamma_{\nu_0}),
$$
where 
$\cup_j\gamma_j$ is the lifting of $V\cap L$ under $\sigma$ such that
$$
	\sigma(\gamma_\ell(0))=0,
	\quad \ell=1,\ldots,\nu_0.
$$ 
Since $B$ and $C$ are proper subvarieties of $A$, there exists $L$ such that 
\beq\Label{generic-L}
	j^\nu_{0} \gamma_\ell \in A\setminus(B\cup C),
	\quad \ell=1,\ldots,\nu_0.
\eeq
Therefore by the additivity of multiplicity, 
we obtain
$$
	\mult\left(f\big|_L, \frac{\partial(\psi_1,\ldots,\psi_d)}{\partial (z_{1},\ldots,z_{d})}\big|_L\right)
        \leq d\nu\mu
$$
for some coordinates at $0$ constructed by a linear change of local coordinate systems satisfying \eqref{local-coordinates} associated to irreducible components of $V\cap L$ .
\epf

In the following corollary
we use the convention that
$\mult (f)=1$ if $f$ has $0$ components.

\bc\Label{multiplicity-J}
Let
$(f,\psi)\colon (\C^n,0)\to (\C^{n-d}\times\C^d,0)$ 
be a holomorphic map germ satisfying
$$
	\nu:=\mult (f) <\infty,
	\quad 	
	\mu:= \mult (f, \psi)  <\infty.
$$
Then after a linear change of 
coordinates 
$(z_1,\ldots,z_{n})$
and another linear coordinate change in $\C^d$,
the partial Jacobian determinant
\beq\Label{J-part}
	J:=\frac{
		\partial(\psi_1,\ldots,\psi_{d})
	}{
		\partial(z_1,\ldots,z_{d})
	}
\eeq
satisfies
\beq\Label{fjpsi}
    \mult(f, J)\le d \nu\mu,
    \quad
    \mult(f, J, \psi_2,\ldots,\psi_d)\le d \nu\mu^{d},
\eeq
where $\psi_j$ is the $j$-th component of $\psi$ in the new coordinates.
\ec

\bpf
Applying Proposition~\ref{v} to $\psi$,
we conclude that the first inequality
in \eqref{fjpsi} holds
after a generic linear change of $z$.
The second inequality is obtained by applying Lemma~\ref{siu}
to $(f,J)$ and $F:=(f,\psi)$.
%
\epf

\section{Existence of effective triangular resolutions}

The following is a more precise version
of the meta-procedure (MP2) in Theorem~\ref{meta}:

\bp\Label{triangle}
Let $1\le k\le n$, 
 $f_1, \ldots, f_k, \phi_1,\ldots, \phi_n \in \6O_{n,0}$, 
satisfy
\beq\Label{muj}
	\mu_j:=
	\mult(f_1, \ldots, f_j, \phi_{j+1}, \ldots, \phi_n) <\infty,
	\quad
	1\le j\le k.
\eeq
Then
 the map germ $\phi=(\phi_1,\ldots, \phi_n)$ 
and the filtration of ideals
$I_j:=(f_1,\ldots, f_j)$, $1\le j\le k$,
admit a triangular resolution 
$h=(h_1,\ldots, h_k)$
satisfying
\beq\Label{h-mult}
	\ord_{w_j} h_j \le 
	n\cdot \mu_j \cdot \mult(f_1,\ldots, f_j) 
	,
	\quad
	1\le j\le k.
\eeq
In fact, each $h_j(w_j, \ldots, w_n)$ can be chosen
as Weierstrass polynomial in $w_j$.
\ep

\bpf
Consider 
the coordinate projections
$$
    \pi_j(w_1,\ldots, w_n)
    = 
    (w_{j}, \ldots, w_n)
    \in \C^{n-j+1}
    ,
    \quad
    1\le j \le k,
$$
and let
$$
	W_j := \6V \left(f_1,\ldots,f_j\right),
	\quad
	\widetilde W_j
	:=
	(\pi_{j} \circ \phi)(W_j)
	\subset\mathbb{C}^{n-j+1},\quad 1\leq j\leq k,
$$
where $\6V$ is the zero variety.
Then $W_j$ is of codimension $\ge k$ in $\cn$.
In fact, counting preimages and using \eqref{muj}, we 
conclude that
$\widetilde W_{j}\subset \mathbb{C}^{n-j+1}$ is a proper subvariety of codimension $1$
and
$$
	\pi_{j+1} |_{\widetilde W_j} \colon \widetilde W_j \to \mathbb{C}^{n-j}
$$
is a finite 
holomorphic 
map germ
of degree
$\le \mu_j$.
Then there exist  Weierstrass polynomials 
$Q_j(w_j, \ldots, w_n)$, $j=1,\ldots,k$, 
satisfying
$$
	Q_j=
	w_{j}^{\nu_j}+\sum_{\ell<\nu_j}b_{j,\ell}(w_{j+1},\ldots,w_{n})w_{j}^\ell,
	\quad
	Q_j|_{\2W_j} =0,
	\quad
	\nu_j
	=
	\ord_{w_j} Q_j
	\le \mu_j.
%
$$
Furthermore, Lemma~\ref{radical} implies
$$
	h_j\circ \phi
	\in(f_1,\ldots,f_j),
	\quad
	h_j:= Q_j^{\l_j},
$$
for suitable $\l_j\in \N_{\ge1}$ satisfying
$$
	\l_j \le n\cdot \mult(f_1,\ldots, f_j).
$$
Then $(h_1,\ldots,h_k)$ is a triangular resolution
satisfying \eqref{h-mult}
 as desired.
\end{proof}

\section{Effective Kohn's procedures for triangular resolutions}

The following is a more precise version
of the meta-procedure (MP3) in Theorem~\ref{meta}:

\bp\Label{claim}
Let $1\le k < n$ and $(Q_1,\ldots,Q_{k+1})$ be a triangular resolution 
of $(\G,\6I)$,
where 
$\G\colon (\cn,0)\to(\C^n,0)$
is a holomorphic map germ
and $\6I$
a filtration of ideals $I_1\subset \ldots \subset I_{k+1}\subset\6O_{n,0}$.
Assume
\beq\Label{mus}
	\mu_j = \ord_{w_j} Q_j <\infty,
	\quad
	1\le j\le k,
\eeq
and
\beq\Label{det-inc}
	I_{k+1}\subset  I_k+(J),
\eeq
where $J$ is the Jacobian determinant of $\G$.

Then 
$Q_{k+1}\circ \G$ can be obtained
by applying holomorphic Kohn's procedures 
(P1) and (P2)
to $(\psi, I_k)$
where 
$\psi=(\G_{k+1},\ldots,\G_n)$ 
are the last $k$ components
and
each procedure (P1) and (P2) is applied $\mu_1\cdots \mu_k$ number of times with the root order in (P2) being $\le k+1$.
In particular, if $I_k$ consists of multipliers of order $\ge \eps$,
then $Q_{k+1}\circ \G$ is a multiplier of order $\geq (2k+2)^{-\mu_1\cdots\mu_k} \eps$.
\ep

\begin{proof}
The proof is inspired by  \cite[III.7]{S10}, \cite[\S3]{S17} 
and \cite[\S5]{CD10}.
Since $\mu_j<\infty$ for $j\le k$, 
multiplying by invertible holomorphic functions,
we may assume that 
$$
	Q_j=
	w_{j}^{\mu_j}
	+\sum_{\ell<\mu_j}b_{j,\ell}(w_{j+1},\ldots,w_{n})w_{j}^\ell,
	\quad
	1\le j\le k,
$$
are Weierstrass polynomials satisfying
$$
	f_j:= Q_j\circ \G\in I_j.
$$
In addition, \eqref{det-inc} implies
\beq\Label{f-inc}
	f_{k+1} 
	:=
	Q_{k+1}\circ\G \in I_k + (J).
\eeq

We use the reverse lexicographic order for $\mathbb{N}^k$, i.e. for $L=(\ell_1,\ldots,\ell_k)$ and $\widetilde L=(\tilde\ell_1,\ldots,\tilde\ell_k)$, we let $L<\widetilde L$ if there exists $j_0$ such that $\ell_j=\tilde\ell_j$ for $j>j_0$ and $\ell_{j_0}<\tilde\ell_{j_0}.$
Let
$$
	\Lambda:=
	\{L=(\ell_1,\ldots,\ell_k)\in \mathbb{Z}^k :
	1\leq \ell_j\leq \mu_j,~j=1,\ldots,k\}.$$
For $L\in \Lambda,$ define $A_L\in \6O_{n,0}$ by
$$
	A_L(w):=\partial_{w_1}^{\ell_1}Q_1(w) \cdots\partial_{w_k}^{\ell_k}Q_k(w).
$$
Since $f_j\in I_j$ by the definition of a triangular resolution,
$$
	J_{(1,\ldots,1)}:=
	\frac{
		\partial(f_1,\ldots, f_k, \psi_{k+1},\ldots,\psi_{n})
	}{
		\partial(z_1,\ldots,z_{n}),
	}
$$
which is the Jacobian determinant of the map
$$
	(f_1, \ldots, f_k,  \psi_{k+1},\ldots,\psi_{n} ) 
	= \Phi \circ \G,
$$
where
\beq\Label{P}
	\Phi(w) 
		:= 
	(Q_1(w), \ldots, Q_k(w), w_{k+1}, \ldots, w_n),
\eeq
Then the Jacobian factors as 
$$
	J_{(1,\ldots,1)}
	=
	(A_{(1,\ldots,1)}\circ \G) J
$$
and hence by \eqref{f-inc},
$$
	 (A_{(1,\ldots,1)} \circ \Gamma)f_{k+1} 
	 \in I_k + (J_{(1,\ldots,1)})
$$
is obtained by applying Kohn's procedure (1)
to $I_k$.

Now for a given $L=(\ell_1,\ldots,\ell_k)\in \Lambda$ such that $L>(1,\ldots,1)$, assume by induction that
 that $(A_{\widetilde L} \circ \Gamma)f_{k+1}$ is 
 obtained by applying Kohn's procedures (P1) and (P2).
 Let
$$
	m_L := \# \{ j : \ell_j > 1\}  \le k
$$
be the number of $j\in \{1, \ldots, k\}$ with $\ell_j>1$.
We will show that $(A_L \circ \Gamma)f_{k+1}$ 
is obtained by additionally applying
once each of
the Kohn's procedures (P1) and (P2)
with the root order in (P2)
being $\le m_L+1\le k+1$. 
For $j=1,\ldots,k$, we define $B_j$ as follows:
\ben
\item
In the case $\ell_j>1$,
set
$$
	B_j:=(A_{L_j} \circ \Gamma)f_{k+1} 
	= 
	(A_{L_j} Q_{k+1}) \circ \Gamma
$$
with
$$
	L_j := (\mu_1,\ldots,\mu_{j-1},\ell_{j}-1,\ell_{j+1},\ldots,\ell_k) < L.
$$
%
\item
In the remaining case $\ell_j=1$, set
$$
	B_j:=Q_j \circ \Gamma =f_j.
$$
\een
By our assumption, $B_j$ in both cases
are obtained by applying Kohn's procedures.

Now apply (1) to obtain
$$
	J_L:=
	\frac{
		\partial(B_1,\ldots, B_k, \G_{k+1},\ldots,\G_{n})
	}{
		\partial(z_1,\ldots,z_{n})
	}.
$$
Then $J_L$ is the Jacobian determinant of the map
$$
	(B_1, \ldots, B_k, \G_{k+1}, \ldots, \G_n ) = \Phi \circ \G,
$$
with $\Phi$ as in \eqref{P}.
In view of our assumption that each $Q_j$, $j\le k$ 
is a Weierstrass polynomial in $w_j$ of degree $\mu_j$,
each top derivative $\d_{w_j}^{\mu_j}Q_j$ is constant
and hence $B_j$ only depends on $(w_j, \ldots, w_n)$.
Then using factorization of the Jacobian determinant
 and the triangular property of $B_j$'s,  
we obtain
$$
	J_L=
	c\left(
		\big(
		(\partial_{w_1}^{\ell_1}Q_1)^{m_1}
            \cdots(\partial_{w_k}^{\ell_k}Q_k)^{m_k}
		Q_{k+1}^{m_L}
		\big)
		  \circ
        \Gamma
        	\right)
	J
$$
for some constant $c\ne0$ and integers $m_j\leq m_L + 1$,
and hence by \eqref{f-inc},
$$
	 \left(
	 	(A_L Q_{k+1}) \circ \Gamma
	\right)^{m_L+1}
	 \in I_k + (J_L).
$$
Then  $(A_L Q_{k+1})\circ\G$ is obtained by the Kohn's procedure (2)
with root order $\le m_L+1\le k+1$,
and the proof is
complete by induction.
\end{proof}

%
%
%
%

\section{Effective estimates}

\bc\Label{iter}
Let
$0\le k < n\le N$ and
 $F_1, \ldots, F_N, f_1, \ldots, f_k\in \6O_{n,0}$, 
satisfy
\beq\Label{muj}
	\nu:= \mult(F_1,\ldots, F_N) < \infty,
		\quad
	\mu:=
	\mult(f_1, \ldots, f_k) <\infty,
\eeq
(where $\mu:=1$ in case $k=0$).
Then there exists a $(k+1)$-tuple 
$g_1,\ldots,g_{k+1}\in \6O_{n,0}$
with
\beq\Label{iter-mult}
	\mult(g_1,\ldots,g_{k+1})
	\le
		n^{k+3} \mu^{n+k+3} \nu^{(n-k)(n+1)},
\eeq
where $g_j\in (f_1,\ldots, f_j)$ for $1\le j\le k$,
and $g_{k+1}$ is 
obtained from the initial set $\{F_1,\ldots,F_N\}$
and the ideal $(f_1,\ldots,f_k)$
via the Kohn's procedures 
(P1) and (P2),
each applied 
$\le	(n\mu^2 \nu^{n-k})^k$
number of times 
with the root order in (P2) being $\le k+1$.
In particular, 
if $f_1,\ldots, f_k$ are multipliers of order at least 
$\eps>0$,
then $g_{k+1}$ is a multiplier of order 
\beq\Label{g-ord}
	\ge \frac{
		\eps
	}{
		(2k+2)^{(n\mu^2 \nu^{n-k})^k}
	}
	.
\eeq

Furthermore, 
after a linear change of coordinates 
$z=(z_1,\ldots,z_n)$,
$g_j$ can be chosen of the form
$$
	g_j(z)
	=
	Q_j(z_j,\ldots, z_k, \psi_{k+1} (z) ,\ldots,\psi_{n} (z) ),
	\quad
	1\le j\le k+1,
$$
where 
$\psi_j$'s are linear combinations of $F_j$'s.
%
\ec

\bpf
Applying Lemma~\ref{siu},
we obtain (generic)
linear combinations
$\psi=(\psi_{k+1},\ldots,\psi_{n})$
 of $F_j$'s
such that
\beq
	\mult(z_1,\ldots, z_k,\psi)
		\le
	\nu^{n-k},
	\quad
	\mult(f, \psi) 
		\le 
	\mu \nu^{n-k}.
\eeq
Next apply 
Corollary~\ref{multiplicity-J}
to obtain,
after (generic) linear change of coordinates
$z=(z_1,\ldots,z_n)$ 
and (generic) linear transformation of $\psi_j$'s,
\beq\Label{min'}
         \mult\left(
         	f, J
	\right)
         	\le 
	n\cdot \mult(f) \cdot \mult(f,\psi)
	\le
	n\mu^2\nu^{n-k},
	\quad
\eeq
and
\beq\Label{fjpsi}
	\mult(f, J, \psi_{k+2}, \ldots, \psi_n)
		\le
	n\cdot \mult(f) \cdot ( \mult(f,\psi) )^{n-k}
		\le
	n \cdot \mu (\mu \nu^{n-k})^{n-k},
\eeq
where
$$
	J:= 
		\frac{
			\partial(\psi_{k+1},\ldots,\psi_{n})
		}{
			\partial (z_{k+1},\ldots,z_{n})
		}
	.
$$

Now apply Proposition~\ref{triangle}
for the map germ
$$
	\G(z):=(z_1,\ldots, z_k,\psi(z)),
$$ 
and the filtration of ideals 
$$
	I_j:=(f_1,\ldots, f_j), 
		\; 
	1\le j\le k,
		\quad
	I_{k+1} := (f_1,\ldots, f_k, J),
$$
to obtain a triangular resolution $(h_1,\ldots, h_{k+1})$ 
satisfying
\beq\Label{qjs}
	\ord_{w_j} h_j
		\le
		n\cdot \mult(f) 
		\cdot
	\mult(f,\psi)
		\le
	n\mu^2 \nu^{n-k}
\eeq
for
	$1\le j\le k$
and
\begin{multline}
	\Label{ql1}
	\ord_{w_{j+1}} h_{k+1}
		\le 
		n\cdot \mult(f, J)
		\cdot
	\mult(f,J,\psi_{k+2},\ldots,\psi_{n})
	\\
		\le
	n\cdot 	
	n\mu^2\nu^{n-k}
	\cdot
	n \cdot \mu (\mu \nu^{n-k})^{n-k}
	\le n^3 \mu^{n-k+3} \nu^{(n-k)(n-k+1)}
	,
\end{multline}
in view of \eqref{min'} and  \eqref{fjpsi}.
Setting 
$$
	g_j := h_{j} \circ\G,
	\quad
	1\le j\le k+1,
$$
and counting preimages using the
multiplicativity of the multiplicity under composition, we obtain
$$
	\mult(g_1,\ldots,g_{k+1}) 
	\le 
	\ord_{w_1} h_1 \cdots \ord_{w_{k+1}} h_{k+1}
	\cdot 
	\mult(\G)
$$
	implying
$$
	\mult(g_1 ,\ldots, g_{k+1})
		\le
	(n\mu^2 \nu^{n-k})^k
	\cdot
		n^3 \mu^{3+n-k} \nu^{(n-k)(n-k+1)}
	\cdot \nu^{n-k}
	=
	n^{k+3} \mu^{n+k+3} \nu^{(n-k)(n+1)}
	.
$$

Since 
$J$ equals
the Jacobian determinant of $\G$
and
$I_{k+1}\subset  I_k + (J)$,
we can
apply Proposition~\ref{claim}
to conclude that $g_{k+1}$
can be obtained
by applying holomorphic Kohn's procedures 
to the initial set of components $\psi$ 
and the ideal  $I_k$
with each procedure (P1) and (P2) applied 
$$
	\ord_{w_1} h_1 \cdots \ord_{w_k} h_k
	\le
	(n\mu^2 \nu^{n-k})^k
$$ 
number of times with the root order in (2) being $\le k+1$.
%

In particular, if 
each $\psi_j$ is a pre-multiplier
and 
each $f_j$ a multiplier of order $\ge \eps$,
it follows that 
each
$g_j \in I_j = (f_1, \ldots, f_j)$
is a multiplier of order $\ge \eps$
for $1\le j\le k$
and
$g_{k+1}$
is a multiplier of order
\eqref{g-ord}
as desired.
\epf

\bc\Label{compute}
There exists a function $\eps\colon \N_{>0}\times \N_{>0} \to \R_{>0}$,
such that 
for every collection of
holomorphic pre-multipliers 
in $\C^n$
with multiplicity $\le \nu$,
there is a finite sequence of 
Kohn's procedures (1) and (2) in Definition~\ref{k-hol} 
producing the unit as a multiplier of order
$\ge \eps(n,\nu)$.
In fact, one can take
\beq\Label{eps-n}
\eps(n,\nu) =
	\frac1{
		4
		(2n+2)^{2(n\nu)^{ (3n)^{n+1} }}
	}.
\eeq
\ec

\bpf
The case $n=1$ is well-known: for any premultiplier $\psi$
of multiplicity $\le \nu$, the unit is a multiplier of order
$\ge 1/4\nu$, which is greater than $\eps$ given by \eqref{eps-n}
with $n=1$.

Thus we may assume $n\ge2$ in the following.
Applying Corollary~\ref{iter} repeatedly
starting from $k=0$,
 we 
obtain holomorphic function germs 
$$
	f_{k,j}\in \6O_{n,0},
	\quad
	1\le j\le k\le n,
$$
with 
\beq\Label{fkk}
	\mult(f_{k,1}, \ldots, f_{k,k}) \le \mu_k,
\eeq
where $\mu_k$ are given
recursively by
$$
	\mu_0=1,
	\quad
	\mu_{k+1}
	 	= 
	 n^{k+3}  \nu^{(n-k)(n+1)} \mu_k^{n+k+3},
$$
from which
$$
	\mu_k = n^{a_k} \nu^{b_k},
$$
where the integer sequences $(a_k)$ and $(b_k)$
are given recursively by
$$
	a_1=3,
	\quad
	b_1=n(n+1),
	\quad
	a_{k+1} = (n+k+3)a_k +  k+3,
	\quad
	b_{k+1} = (n+k+3)b_k + (n-k)(n+1). 
$$
Since $n\ge1$, it is easy to see by induction that
$
	a_k\ge k+3
$,
from which it follows that
$$
	a_{k+1} \le (n+k+3)a_k +  a_k = (n+k+4)a_k,
	\quad
	k\ge 1,
$$
and then by induction
$$
	a_{k+1} \le (n+5)(n+6)\cdots (n+k+4) a_1 \le 3(2n+3)^{k}
	\le (3n)^{k+1},
	\quad
	n\ge 3,
$$
while for $n=2$, the same inequality can be checked directly:
$$
	a_2 \le  3(2+5) \le (3\cdot 2)^2.
$$

Similarly,
$$
	b_{k+1} \le (n+k+3)b_k + b_k
	= (n+k+4)b_k,
$$
whence
$$
	b_{k+1} \le (n+5)\cdots (n+k+4)b_1
	\le n(n+1)(2n+3)^k
	\le (3n)^{k+2},
	\quad
	n\ge 3,
$$
and directly for $n=2$:
$$
	b_2 \le (2+5) \cdot 2(2+1) \le (3\cdot 2)^3.
$$
Putting all together, we obtain
\beq\Label{muk}
	\mu_k \le n^{(3n)^k} \nu^{(3n)^{k+1}}.
\eeq

Next starting with $\eps=1/2$
and iterating \eqref{g-ord}, we conclude
that $f_{k,j}$ are multipliers of order $\ge \eps_k$,
where
$$
	\eps_0 =1/2,
	\quad
	\eps_{k+1} = 
	 \frac{
		\eps_k
	}{
		(2k+2)^{(n\mu_k^2 \nu^{n-k})^k}
	}
	,
$$
hence
$$
	\eps_{k+1} \ge 
	\frac{1}{
	 2(2n+2)^{A_{k+1}}
	 },
$$
where
$$
	 A_{k+1} = \sum_{j=1}^k (n\nu^{n-j} \mu_j^2)^j
	 \le k (n \nu^{n-k} \mu_k^2)^k
	 \le (n-1) (n\nu^{n-1} \cdot n^{2(3n)^k} \nu^{2(3n)^{k+1}})^k
	 \le 
	 	(n\nu)^{(3n)^{k+2}},
$$
and finally
$$
	\eps_n \ge
	\frac1{
		2(2n+2)^{(n\nu)^{ (3n)^{n+1} }}
	}.
$$

Now consider the monomials
$$
	z_j^{\mu_n} \in (f_{n,1}, \ldots, f_{n,n})
$$
and apply Kohn's procedure (2) to conclude
that each $z_j$ is a multiplier of order
$
	\eps_n/\mu_n,
$
and applying procedure (1), we conclude that $1$
is a multiplier of order
$$
	\frac{\eps_n}{2\mu_n}
	\ge
	\frac1{
		4(2n+2)^{(n\nu)^{ (3n)^{n+1} }}
		(n \nu)^{(3n)^{n+1}}
	}
	\ge
	\frac1{
		4
		(2n+2)^{2(n\nu)^{ (3n)^{n+1} }}
	}
	.
$$
\epf

\section{Examples of the effective algorithm}\Label{example}
We shall consider here
classes of special domains in $\C^4$
locally given by
$$
	\Im z_4 + \sum_{j=1}^3 |\psi_j(z_1,z_2,z_3)|^2 < 0,
$$
%
where
\beq\Label{co-squares}
	\psi_j(z)=z_j^2+ \ldots,
	\quad
	j=1,2,3,
	\quad
	z\in \C^3,
\eeq
where we use the following convention:

{\bf Convention}.
The dots stand for arbitrary terms of order 
higher than all previous terms.

We shall perform our analysis in a neighborhood of $0$
and hence regard all functions as germs at $0$.
The type (at $0$) of $\psi=(\psi_1, \psi_2, \psi_3)$ is $2$ and the multiplicity
$$
	\mu :=	\mult(\psi_1, \psi_2, \psi_3) = 2^3=8.
$$

In course of our effective algorithmic process, 
we repeatedly apply linear changes of coordinates $z=(z_1, z_2, z_3)$,
while it will be convenient to rename and keep the old coordinates
as $u=(u_1, u_2, u_3)$.

\subsection*{Step 0: The first multiplier -- select a Jacobian determinant of pre-multipliers.}
In general we select pre-multipliers 
among arbitrary linear combinations of the $\psi_j$
with constant coefficients.
(Note that combination with holomorphic coefficients
may not be pre-multipliers).
In our case
the Jacobian ideal is generated by the single function
$$
	J_1(z) =  
	\frac{
    		\partial(\psi_{1}, \psi_2, \psi_{3})
	}
    	{
		\partial(z_{1}, z_2 ,z_3)
	}
	(z)
	\sim
	z_1 z_2 z_3 + \ldots,
	\quad
	\mult J_1 = 3,
$$
where we generally write $\sim$
for an equality up to an
invertible germ of holomorphic function.

\subsection*{Step 1.1: Select generic complementary pre-multipliers.}
Following Siu \cite[(III.3)]{S10}, we look for pre-multipliers
$\phi_2, \phi_3$ among generic linear combinations of $\psi_j$
complementing $J_1$ in the sense of 
an effective joint multiplicity, i.e.\
such that
$
	\mult(J_1, \phi_2,\phi_3)
$
is effectively bounded.
In our case, we can replace $\psi_j$ with combinations
$$
	\psi_j := z_j^2 - z_1^2 + \ldots,
	\quad
	j=2, 3,
$$
that yield the effective bound
\beq\Label{quo}
	\mult(J_1, \psi_2, \psi_3)  
	= \dim \6O_3 / (J_1, \psi_2, \psi_3)
	\le 5\cdot 2\cdot 2 =20.
\eeq
Indeed
$$
(z_2 z_3)J_1 \sim z_1z_2^2 z_3^2 + \ldots
= z_1^5 + \ldots 
\mod (\psi_2, \psi_3),
$$
and hence the quotient in \eqref{quo}
is spanned by the monomials 
$$
	z_1^{\a_1} z_2^{\a_2} z_3^{\a_3}, \quad \a_1\le 4, 
	\quad
	\a_2, \a_3 \le 1.
$$

\subsection*{Step 1.2: Select a direction of an effective vanishing order.}
We next look to change the linear coordinates $(z_1, z_2, z_3)$
to make effectively bounded the vanishing order
of $J$ along $z_2=z_3=0$ and
the multiplicity
$
	 \mult(z_1, \psi_2, \psi_3)
$.
For this, consider coordinate  change with
$$
	(u_1, u_2, u_3) = (z_1, z_2 + z_1, z_3 + z_1),
$$
so that our data take the form
\beq\Label{JJ}
	J \sim z_1(z_2 + z_1)(z_3 + z_1) + \ldots
	= u_1u_2u_3 +\ldots
\eeq
along with
$$
	\psi_j = (z_j + z_1)^2 - z_1^2 + \ldots
	=
	u_j^2 - u_1^2 +\ldots
$$
and
$$
	\mult(\G_1) = 2^2=4,
	\quad
	 \G_1(z): = (z_1, \psi_2(z), \psi_3(z)),
$$
where $\G_1$ is the 
finite map as in Corollary~\ref{k}
that is used to decompose multipliers.

\subsection*{
	Step 1.3: Select a decomposable multiplier 
	$f_1 = Q\circ \G_1$ in the radical of $(J)$
}
We look for
a multiplier $f_1= Q\circ \G_1$ 
as in Corollary~\ref{k},
where $Q$
is a holomorphic function with effectively bounded 
vanishing order $\mu_1$ in $z_1$:
$$
	\mu_1 \le n \mu^n = 3\cdot 8^3.
$$
Any method can be used to construct $f_1$ here.
One possible approach consists of 
taking $f_1$ to be the product of all transformations of $J$
under the action of the
deck transformation group of the finite covering map germ
$\G_1\colon(\C^n,0)\to(\C^n,0)$,
i.e.\ setting
$$
	f_1 := \prod_{g\in {\rm Deck}(\G_\psi)} J\circ g,
$$
where ${\rm Deck}(\G_1)$ denotes 
the group of all deck transformations,
i.e.\ germs of biholomorphic transformations 
$g\colon(\C^n,0)\to (\C^n,0)$ with $\G_1\circ g = \G_1$.

In our case,
the deck transformation group of $\G_1$
is generated by the involutions
changing the signs of some of $u_j$, $j=2,3$,
i.e.\ transformations having in $u$-coordinates the form
$$
	g(z) =
		(z_1, \eps_2(z_2+z_1) - z_1, \eps_3(z_n+z_1) - z_1)
		+\ldots,
	\quad
	\eps_2, \eps_3 \in \{ 1, -1\}.
$$
Then from \eqref{JJ} we obtain
$$
	f_1
	\sim 
	\big(
		z_1(z_2 + z_1) (z_3 + z_1)
	\big)^4 + \ldots
	= (u_1 u_2 u_3)^4 +\ldots
	,
$$
where
$$
	\mult(f_1, \psi_2, \psi_3) \le n\mu^{n+1}
	= 3 \cdot 8^4.
$$

\subsection*{Step 2.1: Select a complementary partial determinant.}
We next follow the lines of the proof of 
Corollary~\ref{iter}
with $k=1$ and look for a linear
change of coordinates such that,
for the
partial determinant
\beq\Label{J2}
	J_2 = 
	\frac{
    		\partial(\psi_{2},\psi_3)
	}
    	{
		\partial(z_{2},z_{3})
	},
\eeq
the multiplicity
\beq\Label{m3}
	\mult(f_1, J_2, \psi_3)
\eeq
is effectively bounded.
 In general, the existence of such coordinates
 follows from Corollary~\ref{multiplicity-J}.
 In our case, we can apply a coordinate change with
$$
 	(u_1, u_2, u_3)
	=
	(z_1+z_2+z_3, z_2 + z_1, z_3 + z_1)
$$
and hence
transforming our data into
$$
	\psi_j = (z_j+ z_1)^2 - (z_1+z_2 + z_3)^2 + \ldots,
$$
and 
$$
	f_1 \sim 
	\big(
		(z_1+z_2+ z_3)(z_1 + z_2) (z_1 + z_3)
	\big)^{4} + \ldots.	
$$
Then computing \eqref{J2}  yields:
$$
	J_2 = (z_2+z_1)(z_3+z_1) - (2z_1 + z_2 + z_3)(z_1+z_2+z_3)+\ldots,
$$
or equivalently
\beq\Label{J2y}
	J_2 = u_2u_3 - (u_2+u_3)u_1+\ldots.
\eeq

\subsection*{Step 2.2: Select complementary pre-multipliers.}
By now we have an effectively bounded multiplicity 
$\mult(f_1, J_2)$.
In order to effectively bound \eqref{m3},
we replace $\psi_3$ with 
a linear combination of $\psi_j$,
denoted again by $\psi_3$ by a slight abuse of notation,
 $$
 	\psi_3 :=  u_1^2 + u_2^2 + u_3^2 +\ldots.
$$
Indeed, in the $u$-coordinates we have
$$
	f_1 \sim (u_1u_2u_3)^4 + \ldots,
$$
and it is easy to verify that
the lowest order terms
of $(f_1, J_2, \psi_3)$
have no common zeroes other than $z=0$
and 
\beq\Label{m3'}
\mult(f_1, J_2) \le 
\mult(f_1, J_2, \psi_3) \le 
n (\mu_1 \mu)^n
= 3\cdot (8\mu_1)^3.
\eeq

\subsection*{Step 2.3: Select decomposable function
$H = Q_2\circ \G_1$ in the radical of the ideal $(f_1, J_2)$}
We look for a function $H= Q_2\circ \G_1$ as in 
Proposition~\ref{claim},
where $Q_2$ is a Weierstrass polynomial
of effectively bounded degree in $w_2$
with holomorphic coefficients in $w_3$.
For our purposes, it will suffice to select $Q_2(w_2,w_3)$
to be just holomorphic with an effectively bounded vanishing order in $w_2$
$$
	\ord_{w_2} Q_2 = \nu \le n(\mu\mu_1)^n.
$$
Then using Proposition~\ref{claim}, 
we conclude that $H$ is a multiplier of
an order effectively bounded from below.

\subsection*{Step 2.4: Select new decomposable multipliers}
Proceeding as in the proof of 
Corollary~\ref{iter},
we take 
$$
	\G_{2}(z) :=
	(z_1, z_2, \psi_3(z)),
$$
and look for the multipliers of the form
$$
	\2f_j = \2Q_j \circ \G_2,
	\quad
	j=1,2,
$$
such that 
\beq\Label{nu12}
	\nu_1=\mult(\2Q_1(w_1, w_2, w_3), w_2, w_3)
	\le \mu\mu_1
	\quad
	\nu_2 = \mult(\2Q_2(w_2, w_3), w_3) \le
	n(\mu\mu_1)^{n+1}.
\eeq

\subsection*{Step 3.1: Select complementary partial determinant}
We now repeat the process with $k=2$
in the proof of Corollary~\ref{iter}.
We have
$$
\mult (f)\le
\mult(f, \2\psi) \le \mu \nu_1\nu_2 
\le n \mu (\mu\mu_1)^{n+2},
$$
where
$
f=(\2f_1, \2f_2).
$
Taking again suitable linear change 
of coordinates $z$, we may assume
as in Corollary~\ref{multiplicity-J}
that  
$$
	\mult(f, J_3)\le \mult (f) \; \mult(f, \2\psi)
	\le
	(n\mu)^2 (\mu\mu_1)^{2n+4},
$$
where
$$
\quad
J_3 = \frac{\d\psi_3}{\d z_3}.
$$

\subsection*{Step 3.2: Select a decomposable function in the radical of the ideal $(f, J_3)$}
Note that there is no analogue for Step 2.2 
because $(f, J_3)$ is finite.
Next, similarly to Step 2.3, 
select $\2H=\2Q_3\circ \G_2$,
where $\2Q_3=\2Q_3(w_3)$ has multiplicity effectively
bounded 
by
$$
	\ord_{w_3}\2Q_3 
	\le n(\mu \nu_1\nu_2)^n
	\le n (n\mu (\mu\mu_1)^{n+2})^n
	= n^{n+1}\mu^n (\mu\mu_1)^{n^2+2n}.
$$
Then $\2H$ is multiplier with effectively bounded order
by Proposition~\ref{claim}.

\subsection*{Step 3.3: Effective termination}
We obtain the multipliers
$$
	(\2f_1, \2f_2, \2H) = (\2Q_1, \2Q_2, \2Q_3)
	\circ \G_2
$$
and 
$$
	\mult(	\2f_1, \2f_2, \2H)
	\le \mult(\2Q_1, \2Q_2, \2Q_3)
	\cdot \mult(\G_2)
	\le 
	\nu_1\nu_2 \; (\ord_{w_3}\2Q_3) \cdot \mu.
$$
Thus we have constructed a triple
of multipliers of effective subellipticity orders with 
finite effectively bounded multiplicity.
Hence the linear coordinate functions $z_j$
are in the effective radical of 
$(\2f_1, \2f_2, \2H)$
and hence the algorithm terminates
by taking the Jacobian of $(z_1, z_2, z_3)$,
which is a multiplier
with an effectively bounded subellipticity 
order.
All effective bounds are directly computable
from the above estimates
similarly to Corollary~\ref{compute}.

\end{document}